\documentclass{article}

\setlength{\oddsidemargin}{1cm}
\setlength{\evensidemargin}{1cm}
\textheight 8.5 in
\textwidth 5.5 in

\usepackage{color}
\usepackage{amsmath, amssymb, amsthm}

\newtheorem{theorem}{Theorem}
\newtheorem{lemma}{Lemma} 
\newtheorem{prop}{Proposition}

\newtheorem*{proof*}{Proof}


\allowdisplaybreaks

\begin{document}

\title{{\bf Number of spanning trees in a wheel graph \\ with two identified vertices via hitting times}}

\author{Shunya TAMURA \\
Okegawa City Okegawa West Junior High School \\
Kawataya, Okegawa, 363-0027, Japan \\ 
e-mail: shunya.tamura059@gmail.com \\
\\ 
Yuuho TANAKA \\
Faculty of Fundamental Science and Engineering \\ 
Waseda University \\
Shinjuku, Tokyo, 169-8555, Japan \\
e-mail: tanaka\_yuuho@aoni.waseda.jp
}

\date{\empty}

\maketitle
\begin{abstract}
In this paper, we provide an exact formula for the average hitting times in a wheel graph $W_{N+1}$ using a combinatorial approach.
For this wheel graph, the average hitting times can be expressed using Fibonacci numbers when the number of surrounding vertices is odd and Lucas numbers when it is even.
Furthermore, combining the exact formula for the average hitting times with the general formula for the effective resistance of the graph allows determination of the number of spanning trees of the graph with two identified vertices. 

\end{abstract}

\vspace{10mm}

\begin{small}
\par\noindent
{\bf Keywords}: random walk, hitting time, spanning tree, Fibonacci number, Lucas number 
\end{small}
\begin{small}
\par\noindent
{\bf Abbr. title}: Average hitting time in a wheel graph
\par\noindent
{\bf 2000 Mathematical Subject Classification}: 60F05, 05C50, 15A15. 
\end{small}

\vspace{10mm}

\section{Introduction \label{sec01}}

A simple random walk on a graph $G$ is a discrete stochastic process in which a particle initially located at a vertex $x$ moves to an adjacent vertex $y$ along the edge $xy$ with a probability of $1/d(x)$, where $d(x)$ is the degree of vertex $x$ of $G$.
Random walks offer a concise and powerful means of modeling stochastic processes, particularly for understanding probabilistic phenomena that evolve over time; because each step of a random walk is probabilistic and independent, they are effective for describing many real-world phenomena.
Also, random walks are important in graph theory~\cite{Lovasz}.

Furthermore, for two vertices $x$ and $y$ on graph $G$, the expected number of steps for a random walk starting from $x$ to reach $y$ for the first time is called the \emph{average hitting time from $x$ to $y$} and is denoted as $h(G; x, y)$. 
The average hitting time is an important metric in fields such as probability theory, graph theory, and network analysis~\cite{Ald, Sharavas}. For example, it can be used to evaluate the accessibility and efficiency of specific vertices within a graph, and it is important in the analysis of random movements including diffusion phenomena. 
However, the analysis of the average hitting time on a graph depends heavily on the rules of the random walk and the structure of the graph, so it is difficult to find an explicit formula for the average hitting time from one vertex to any other vertex in a general graph~\cite{Ald, Sharavas}.
On the other hand, explicit formulas have been provided for classes of highly symmetric graphs and similar structures~\cite{Etal, Nishimura, TS, TY, TY2}.
In particular, Yang~\cite{YY} obtained an exact formula for the average hitting times of simple random walks on a wheel graph $W_{N+1}$ formed by placing a single vertex inside an $N$-vertex cycle and connecting the vertices of the cycle to the central vertex.
Note that $W_{N+1}$ is assumed to be a simple graph containing no multiple edges or loop edges. 

In this paper, we obtain an exact formula for the average hitting times of simple random walks on the aforementioned wheel graph by using an analytical method similar to that studied in \cite{Etal, TS, TY, TY2}.
Also, by using the general formula for the average hitting time and the effective resistance of the graph, we provide an exact formula for the number of spanning trees in the graph with two identified vertices.
The number of spanning trees in a graph is a widely used metric for gaining deeper insight into the graph's structure and properties, with many practical applications.
Specifically, it suggests the connectivity and redundancy of the graph, information that is valuable for network design and fault tolerance analysis. 
For the wheel graph $W_{N+1}$, formulas for metrics such as the average hitting time and graph resistance differ depending on the parity of the number of cycle vertices.
These formulas are expressed in terms of Fibonacci numbers for graphs with an odd number of vertices and Lucas numbers for graphs with an even number of vertices.
Our proof is based on combinatorial arguments without the need for spectral graph theory.
Furthermore, not only does the formula suggests a relationship between the average hitting time of a simple random walk on $W_{N+1}$ and the Fibonacci or Lucas numbers, this relationship is also evident from the intermediate results obtained during the proof.

The rest of this paper is organized as follows.
In Section~\ref{sec02}, we define Fibonacci and Lucas numbers and introduce the fundamental relationships between them that are necessary to obtain our results.
In Section~\ref{sec03}, we present the model under consideration, explain how to formulate the matrix equations, and describe our approach to solving the problem. 
In Section~\ref{sec04}, we obtain the exact formula for the average hitting times of simple random walks on a wheel graph and provide a proof, considering cases where the number of vertices in the graph is either odd or even.
In Section~\ref{sec05}, we calculate the effective resistance of the wheel graph and combine it with the exact formula for the average hitting time to determine the number of spanning trees in the wheel graph with two identified vertices.
Finally, we summarize our results in Section~\ref{sec06}.

\section{ Fibonacci and Lucas numbers \label{sec02}}

In this section, we define the Fibonacci and Lucas sequences and then introduce several well-known formulas related to them. The Fibonacci sequence is defined by the recurrence relation $F_{n+2}=F_{n+1}+F_{n}$, where $F_{0} =0$, $F_{1} =1$, and $n=0, 1, 2, \cdots$. Similarly, the Lucas sequence is defined by the recurrence relation $L_{n+2}=L_{n+1}+L_{n}$, where $L_{0} =2$, $L_{1} =1$, and $n=0, 1, 2, \cdots$. It is well known that Fibonacci and Lucas numbers are closely related. For example, the relationship $L_{n}=F_{n-1}+F_{n+1}$ holds, and this and the following related formulas are used throughout this paper:

\begin{align}
F_{2n-2}-3F_{2n}+F_{2n+2}=0 \label{Fib1} \\
F_{-n}=(-1)^{n+1}F_n \label{Fib12} \\
L_{-n}=(-1)^nL_n \label{Luc11} \\
L_{2n}-2=L_n^2 \label{Luc10} \\
L_{2n-2}-3L_{2n}+L_{2n+2}=0  \label{Fib2} \\
F_mL_n=F_{m+n}+(-1)^nF_{m-n} \label{formula5} \\
 F_{n} L_{m+1} +F_{n-1} L_{m}  =L_{m+n}  \label{formula6} \\
 F_{n}-F_{n-r}F_{n+r}=(-1)^{n-r}F_r^2 \label{Fib10}\\
F_n^2+F_{n+1}^2=F_{2n+1} \label{Fib11} \\
\displaystyle \sum_{k=1}^{n-\ell} F_{2k-1}=F_{2n-2\ell} \label{Fib8} \\
\displaystyle \sum_{k=1}^{\ell} L_{2k}=L_{2\ell+1}-1 \label{Luc9}
\end{align}

For the proofs of these formulas, please refer to appropriate literature on the Fibonacci and Lucas numbers \cite{Thomas, Koshy}.

\section{Our model \label{sec03}} 

Our model comprises a graph formed by placing a single vertex inside an $N$-vertex cycle and connecting the cycle vertices to the central one. This structure is generally called a wheel because of that resemblance. The total number of vertices in the wheel is $W_{N+1}$, consisting of an $N$-vertex cycle ($v_0, v_1, v_2, \ldots, v_{N-1}$) and one central vertex $v_N$. 
Here, when we refer to an odd number of vertices, we mean specifically those in the cycle, not the total number of vertices in the wheel. Similarly, when we refer to an even number of vertices, we mean that the cycle has an even number of vertices, resulting in the total number of vertices in the wheel being odd. Note that $W_{N+1}$ is assumed to be a simple graph, containing no multiple edges or loop edges. 

The Laplacian matrix 
$L=[L(i,j)]$ of the graph $W_{N+1}$ is defined as follows: 
\begin{center}
$L(i,j)=
\begin{cases}
\deg_{W_{N+1}}(i) & \text{if}\ i = j, \\
 -1 & \text{if}\ i\ \neq j \ \text{ and }\ (v_{i}, v_{j}) \in E, \\
 0 & \text{otherwise.}
\end{cases}$
\end{center}

Let $L'$ be the matrix obtained by removing the first row and the first column of the Laplacian matrix $L$. Define $\vec{h}$ as a column vector whose $k$-th element is $h(W_{N+1}; 0, k)$
($1\leq k\leq N-1$) and whose $N$-th element is $h(W_{N+1}; N, 0)$.

In the case of a random walk on $W_{N+1}$, a random walker moves to any adjacent vertex with equal probability. Therefore, for all $\ell \in \mathbb{Z}_N$, the following relationships hold:
\begin{align*}
h\left(W_{N+1} ; 0, \ell \right)
&=\displaystyle\frac{1}{3}\left(1+h\left(W_{N+1} ;-1,\ell \right)+1+h\left(W_{N+1} ;1,\ell \right) +1+h\left(W_{N+1} ; N, \ell \right)\right), \\
h\left(W_{N+1} ; N, 0 \right)
&=\displaystyle\frac{1}{N}\left(1+h\left(W_{N+1} ;1,0 \right)+1+h\left(W_{N+1} ;2,0 \right) +\cdots+1+h\left(W_{N+1} ; N-1, 0 \right)\right).
\end{align*}
Hence, for all $\ell \in \mathbb{Z}_N$:

\begin{align*}
-h\left(W_{N+1} ;0,\ell -1\right)+3h\left(W_{N+1} ;0,\ell \right)-h\left(W_{N+1} ;0,\ell +1\right)&=3, \\
-h\left(W_{N+1} ;1,0\right)-h\left(W_{N+1} ;2,0\right)-\cdots-h\left(W_{N+1} ;N-1,0\right)+Nh\left(W_{N+1} ;N,0 \right)&=N.
\end{align*}
Then, we have the matrix equation 
$L' \vec{h} = \begin{bmatrix}3&3&\cdots&3&N\end{bmatrix}$. 
By combining this result with the facts that $h(W_{N+1}; 0, \ell) = h(W_{N+1}; 0, N-\ell)$ and $h(W_{N+1}; N, 0) = h(W_{N+1}; N, \ell)$ for all $\ell \in \mathbb{Z}_N$, the number of variables in the problem can be halved. Let $L'(i, j)$ denote the $(i, j)$-th element of the matrix $L'$. 
Let $H_{N+1}$ be the $(\lfloor N/2\rfloor+1) \times (\lfloor N/2\rfloor+1)$ matrix whose $(i,j)$th entry is
\[
\begin{cases}
L'(i,j) &\text{if $N$ is even,\;$j=\frac{N}{2}$,}\\
L'(N,j) &\text{if $i=\lfloor N/2\rfloor+1,\;j\neq\lfloor N/2\rfloor+1$,}\\
L'(i,N) &\text{if $i\neq\lfloor N/2\rfloor+1,\;j=\lfloor N/2\rfloor+1$,}\\
L'(N,N) &\text{if $i=j=\lfloor N/2\rfloor+1$,}\\
L'(i,j)+L'(i,N-j) &\text{otherwise.}
\end{cases}
\]
Define $\vec{h}'$ as a column vector of dimension $(\lfloor N/2\rfloor+1)$ whose $k$-th entry is $h(W_{N+1}; N,0)$ if $k=\lfloor N/2\rfloor+1$, and $h(W_{N+1}; 0,k)$ otherwise. 
Then, the equation $H_{N+1} \vec{h}'= \begin{bmatrix}3&3&\cdots&3&N\end{bmatrix}$ holds. In this problem, the average hitting times can be computed by solving this matrix equation directly, without relying on the graph's resistance or similar concepts. 

\section{Average hitting times \label{sec04}} 

In this section, we obtain an exact formula for the average hitting times of simple random walks on the wheel graph $W_{N+1}$.

\begin{theorem} \label{Mainthm}
For the wheel graph $W_{N+1}$, the exact formula for the average hitting time from central vertex $0$ to peripheral vertex $\ell$ $(1\leq \ell\leq N-1)$ can be expressed as follows:
\begin{align*}
h(W_{N+1};0,\ell)
=\begin{cases}
\displaystyle\frac{4N(F_{N}-F_{N-2\ell})}{F_{N-1}+F_{N+1}} & \text{if $N$ is odd,} \\
\displaystyle\frac{4N(L_{N}-L_{N-2\ell})}{L_{N-1}+L_{N+1}} & \text{if $N$ is even.}
\end{cases}
\end{align*}

For the wheel graph $W_{N+1}$, the exact formula for the average hitting time from central vertex $N$ to peripheral vertex $0$ can be expressed as follows:
\begin{align*}
h(W_{N+1};N,0)
=\begin{cases}
\displaystyle\frac{(4N-3)F_{N+1}-(4N+3)F_{N-1}}{F_{N-1}+F_{N+1}} & \text{if $N$ is odd,} \\
\displaystyle\frac{(4N-3)L_{N+1}-(4N+3)L_{N-1}}{L_{N-1}+L_{N+1}} & \text{if $N$ is even.}
\end{cases}
\end{align*}
\end{theorem}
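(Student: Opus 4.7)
The plan is to solve the matrix equation $H_{N+1} \vec{h}' = [3, \ldots, 3, N]^T$ from Section~\ref{sec03} directly. Setting $a_\ell := h(W_{N+1}; 0, \ell)$ for $\ell = 0, 1, \ldots, N-1$ and $b := h(W_{N+1}; N, 0)$, the cycle-vertex rows amount to a three-term recurrence
\[
3 a_\ell - a_{\ell-1} - a_{\ell+1} = 3 + b, \qquad \ell = 1, \ldots, N-1,
\]
with $a_0 = 0$ and, using cyclic indexing, $a_N = a_0 = 0$. By \eqref{Fib1} and \eqref{Fib2}, the sequences $\{F_{2\ell}\}$ and $\{L_{2\ell}\}$ span the homogeneous solution space, and a constant particular solution $C = 3 + b$ works. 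I therefore posit the ansatz $a_\ell = A F_{2\ell} + B L_{2\ell} + C$.

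The two boundary conditions immediately give $B = -C/2$ and $A \, F_{2N} = (C/2)(L_{2N} - 2)$. Here the parity of $N$ enters unavoidably: identity \eqref{Luc10} holds as stated, $L_{2N} - 2 = L_N^2$, only for odd $N$, while for even $N$ one has $L_{2N} - 2 = L_N^2 - 4 = 5 F_N^2$ (a standard variant derivable from Binet's formula). Combined with $F_{2N} = F_N L_N$, this yields $A = C L_N/(2 F_N)$ for odd $N$ and $A = 5 C F_N/(2 L_N)$ for even $N$, a bifurcation that propagates through every subsequent formula.

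Substituting $A$ and $B$ back, the task reduces to simplifying $A F_{2\ell} + B L_{2\ell}$ in closed form. Using \eqref{formula5} with \eqref{Fib12} one obtains $L_N F_{2\ell} - F_N L_{2\ell} = -2 F_{N-2\ell}$; the parallel computation using the $L_m L_n$ and $5 F_m F_n$ product-to-sum identities gives $5 F_N F_{2\ell} - L_N L_{2\ell} = -2 L_{N-2\ell}$. These collapse the ansatz to $a_\ell = C(F_N - F_{N-2\ell})/F_N$ when $N$ is odd, and $a_\ell = C(L_N - L_{N-2\ell})/L_N$ when $N$ is even.

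Finally, the center-vertex equation $Nb = N + \sum_{j=1}^{N-1} a_j$ together with $b = C - 3$ pins down $C$. The key intermediate step is evaluating $\sum_{j=1}^{N-1} F_{N-2j}$ and $\sum_{j=1}^{N-1} L_{N-2j}$: pairing positive- and negative-index terms via \eqref{Fib12} and \eqref{Luc11} and then invoking \eqref{Fib8} and \eqref{Luc9}, the two sums collapse to $2 F_{N-1}$ and $2 L_{N-1}$ respectively. Using the identities $F_N + 2 F_{N-1} = F_{N-1} + F_{N+1}$ and $L_N + 2 L_{N-1} = L_{N-1} + L_{N+1}$, solving gives $C = 4N F_N/(F_{N-1} + F_{N+1})$ for odd $N$ and $C = 4N L_N/(L_{N-1} + L_{N+1})$ for even $N$, directly producing the claimed formulas for $a_\ell$; the formulas for $b = C - 3$ follow by rewriting $F_N = F_{N+1} - F_{N-1}$ (respectively $L_N = L_{N+1} - L_{N-1}$) in the numerator. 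The real obstacle is the parity split forced by \eqref{Luc10} together with the bookkeeping of the $(-1)^n$ factors in \eqref{Fib12} and \eqref{Luc11} when evaluating the central sum; once the correct identity is selected in each case, the rest of the calculation is mechanical.
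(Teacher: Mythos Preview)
Your argument is correct, and it takes a genuinely different route from the paper. The paper does not solve the recurrence directly; instead it \emph{guesses} a closed form for the entire inverse matrix $H_{N+1}^{-1}$ (Propositions~\ref{prop1} and~\ref{prop:2}), verifies $H_{N+1}K_{N+1}=I$ entry by entry through a long sequence of Fibonacci/Lucas manipulations, and only then reads off $\vec{h}'=H_{N+1}^{-1}[3,\ldots,3,N]^T$ by summing rows. Your method bypasses the inverse altogether: you recognise the cycle rows as a constant-coefficient second-order recurrence with forcing $3+b$, write the general solution $a_\ell=AF_{2\ell}+BL_{2\ell}+(3+b)$, impose $a_0=a_N=0$, and close the system with the hub equation. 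The parity split that the paper handles by two separate inverse-matrix propositions appears in your argument at exactly one place---the evaluation of $L_{2N}-2$ via $L_{2N}=L_N^2-2(-1)^N$---and the remaining work (the product identities collapsing $L_NF_{2\ell}-F_NL_{2\ell}$ and $5F_NF_{2\ell}-L_NL_{2\ell}$, and the telescoping of $\sum F_{N-2\ell}$, $\sum L_{N-2\ell}$ by pairing $\pm$ indices) is short and transparent. What the paper's approach buys is the explicit inverse $H_{N+1}^{-1}$ itself, which exhibits the Fibonacci/Lucas structure in every entry and could in principle be reused for other right-hand sides; what your approach buys is a proof that is an order of magnitude shorter and requires no inspired guess.
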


We provide proofs for the average hitting times on $W_{N+1}$ by separating the cases of odd and even numbers of vertices.

\subsection{Odd \label{subsec041}}
First, we consider the coefficient matrix $H_{N+1}$ of a wheel $W_{N+1}$ whose cycle part consists of an odd number of vertices; here, note that the total number of vertices in the entire wheel graph is an even number. The coefficient matrix $H_{N+1}$ for the wheel graph $W_{N+1}$ is defined as follows: 
\begin{center}
$H_{N+1}(i,j)
=\begin{cases}
3 & \text{if }i=j,\; i \neq \frac{N-1}{2},\frac{N+1}{2},\\
2 & \text{if }i=j=\frac{N-1}{2},\\
N & \text{if }i=j=\frac{N+1}{2},\\
-1 & \text{if }|i-j|=1,\; (i, j) \neq (\frac{N+1}{2}, \frac{N-1}{2}),\\
 & j=\frac{N+1}{2},\;1\leq i\leq \frac{N-1}{2},\\
-2 & \text{if }i=\frac{N+1}{2},\; 1\leq j\leq \frac{N-1}{2},\\
0 & \mbox{otherwise.}
\end{cases}$
\end{center}

An important point to note is that this coefficient matrix is a square matrix of size $\frac{N+1}{2} \times \frac{N+1}{2}$. 
The last row represents the average hitting times from the center to the peripheral vertices. 
For example, the coefficient matrix $H_{9+1}$ for $W_{9+1}$ is 
\begin{align*}
H_{9+1} =\begin{bmatrix}
3&-1&0&0&-1 \\
-1&3&-1&0&-1 \\
0&-1&3&-1&-1 \\
0&0&-1&2&-1 \\
-2&-2&-2&-2&9
\end{bmatrix}.
\end{align*}
In this case, the inverse of the coefficient matrix $H_{N+1}^{-1}$ is given as follows. 
\begin{prop} \label{prop1}
If $N$ is odd, then
\[
H_{N+1}^{-1}(i,j)
=\displaystyle\frac{1}{F_{N-1}+F_{N+1}}
\begin{cases}
2(F_{N}-F_{N-2i}) +F_{N-2j}(L_{2i}-2) &  \text{if } 1\leq i<j \leq \frac{N-1}{2}, \\
2(F_{N}-F_{N-2j}) +F_{N-2i}(L_{2j}-2) &  \text{if } 1\leq j\leq i \leq \frac{N-1}{2}, \\
2(F_{N}-F_{N-2j}) &  \text{if } i=\frac{N+1}{2},\ 1\leq j\leq \frac{N-1}{2},\\
F_{N}-F_{N-2i} &  \text{if } j=\frac{N+1}{2},\ 1\leq i\leq \frac{N-1}{2},\\
F_{N} &  \text{if } i=j=\frac{N+1}{2}.
\end{cases}
\]
\end{prop}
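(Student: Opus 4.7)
The plan is to verify $H_{N+1} M = I$ directly, where $M$ denotes the matrix defined on the right-hand side of the claim, by showing that $\sum_j H_{N+1}(i,j)\, M(j,k) = \delta_{i,k}$ for every pair $(i,k)$. Since $H_{N+1}$ is almost tridiagonal, with only its last row and last column breaking the band pattern, the inner sum collapses for most $i \le (N-1)/2$ to
\[
-M(i-1,k) + 3M(i,k) - M(i+1,k) - M\!\left(\tfrac{N+1}{2},\,k\right),
\]
with small modifications at the boundary rows $i \in \{1,\ \tfrac{N-1}{2},\ \tfrac{N+1}{2}\}$. I would fix $k$ and sweep through $i$, splitting cases according to which branch of the piecewise definition of $M$ governs each of $M(i-1,k)$, $M(i,k)$, and $M(i+1,k)$.

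Whenever these three entries come from a single branch of the formula, the $F_{N-2i \pm 2}$ parts annihilate by the Fibonacci three-term recurrence (\ref{Fib1}), the $(L_{2i \pm 2}-2)$ coefficients annihilate by its Lucas analogue (\ref{Fib2}) up to a constant contributed by the $-2$ shifts, and the leftover term proportional to $F_{N-2k}$ cancels precisely against $-M(\tfrac{N+1}{2},k) = -2(F_N - F_{N-2k})/D$. The boundaries $i=1$ and $i=\tfrac{N-1}{2}$ are handled by the same computation after accounting for the missing sub-diagonal entry and the modified diagonal entry $2$, respectively, via (\ref{Fib12}). The column $k = \tfrac{N+1}{2}$ is easier since $M(j, \tfrac{N+1}{2})$ has a single-formula description, so each tridiagonal row reduces immediately to (\ref{Fib1}); the only non-routine verification is the last row $i = \tfrac{N+1}{2}$, where the sum $\sum_{j=1}^{(N-1)/2} F_{N-2j}$ reindexes to an instance of (\ref{Fib8}) with $\ell=0$, evaluates to $F_{N-1}$, and produces the identity $F_N + 2F_{N-1} = F_{N-1} + F_{N+1} = D$.

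The delicate case is $i = k$ inside the upper-left $\tfrac{N-1}{2} \times \tfrac{N-1}{2}$ block, where $M(k-1,k)$ uses the upper branch while $M(k+1,k)$ uses the lower branch, so that (\ref{Fib2}) no longer applies uniformly. Here I would invoke the product identity (\ref{formula5}) with $n$ even to rewrite $F_{N-2k} L_{2k-2}$, $F_{N-2k} L_{2k}$, and $F_{N-2k-2} L_{2k}$ as linear combinations of $F_{N-2}$, $F_N$, and $F_{N-4k \pm 2}$; the $F_{N-4k \pm 2}$ pieces then collapse under (\ref{Fib1}), and the remaining Fibonacci terms coalesce into $3F_N - 2F_{N-2} = F_N + 2F_{N-1} = F_{N-1} + F_{N+1} = D$, yielding the required $1$ after dividing by the normalization. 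The same device also resolves the neighboring transitional rows $i = k \pm 1$, although there (\ref{Fib2}) alone is in fact sufficient. I expect the bookkeeping at $i = k$ to be the main obstacle; once that identity is verified, the remaining cases are routine applications of the three-term recurrences.
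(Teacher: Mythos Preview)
Your proposal is correct and follows essentially the same approach as the paper's own proof: define the candidate inverse $K_{N+1}$ and verify $H_{N+1}K_{N+1}=I$ entrywise, exploiting the near-tridiagonal structure of $H_{N+1}$ so that most row sums collapse via the three-term recurrences (\ref{Fib1}) and (\ref{Fib2}), and using the product identity (\ref{formula5}) to resolve the diagonal entries where the two piecewise branches meet. The paper carries this out with the same case split (boundary rows $i=1$, $i=\tfrac{N-1}{2}$, $i=\tfrac{N+1}{2}$, generic interior, and the last-row sum via (\ref{Fib8})), so there is no substantive difference between your plan and theirs.
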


\begin{proof}
In this case the matrix $K_{N+1}$ is defined as follows: 
\[
K_{N+1}(i,j)
:=\displaystyle\frac{1}{F_{N-1}+F_{N+1}}
\begin{cases}
2(F_{N}-F_{N-2i}) +F_{N-2j}(L_{2i}-2) &  \text{if } 1\leq i<j \leq \frac{N-1}{2}, \\
2(F_{N}-F_{N-2j}) +F_{N-2i}(L_{2j}-2) &  \text{if } 1\leq j\leq i \leq \frac{N-1}{2}, \\
2(F_{N}-F_{N-2j}) &  \text{if } i=\frac{N+1}{2},\ 1\leq j\leq \frac{N-1}{2},\\
F_{N}-F_{N-2i} &  \text{if } j=\frac{N+1}{2},\ 1\leq i\leq \frac{N-1}{2},\\
F_{N} &  \text{if } i=j=\frac{N+1}{2}.
\end{cases}
\]
We show that $H_{N+1}K_{N+1}=I_{\frac{N+1}{2}}$.
First, we demonstrate that the diagonal elements are equal to $1$. 

\begin{itemize}
\item
We calculate that 
$H_{N+1}K_{N+1}(1, 1)=1$. 
\begin{align*}
& \sum_{k=1}^{\frac{N+1}{2}}H_{N+1}(1,k)K_{N+1}(k,1) \\
= & 3K_{N+1}(1,1)-K_{N+1}(2,1)-K_{N+1}\left(\frac{N+1}{2},1\right) \\
= & \frac{1}{F_{N-1}+F_{N+1}} \times \\
 & ( 6( F_N -F_{N-2}) +3F_{N-2} (L_{2}-2) -2( F_N -F_{N-2}) -F_{N-4} (L_{2}-2) -2( F_N -F_{N-2}))\\
= & \frac{1}{F_{N-1}+F_{N+1}}( 2( F_N -F_{N-2}) +(L_2-2)(3F_{N-2} -F_{N-4})) \\
= & \frac{1}{F_{N-1}+F_{N+1}}( 2F_{N-1} +2F_{N-2} + F_{N-3})\\
= & \frac{1}{F_{N-1}+F_{N+1}}( 3F_{N-1} + F_{N-2})\\
= & \frac{1}{F_{N-1}+F_{N+1}}( 2F_{N-1} + F_N)\\
= & \frac{1}{F_{N-1}+F_{N+1}}( F_{N-1} + F_{N+1}) \\
= & 1.
\end{align*}

\item
We calculate that 
$H_{N+1}K_{N+1}\left(\frac{N-1}{2},\frac{N-1}{2}\right)=1$. 
\begin{align*}
& \sum_{k=1}^{\frac{N+1}{2}}H_{N+1}\left(\frac{N-1}{2},k\right)K_{N+1}\left(k,\frac{N-1}{2}\right) \\
= & -K_{N+1}\left(\frac{N-3}{2},\frac{N-1}{2}\right)+2K_{N+1}\left(\frac{N-1}{2},\frac{N-1}{2}\right)-K_{N+1}\left(\frac{N+1}{2},\frac{N-1}{2}\right) \\
 & \frac{1}{F_{N-1}+F_{N+1}}( -2(F_N-F_3)-F_1(L_{N-3}-2) +4(F_N-F_1) +2F_1(L_{N-1}-2) -2(F_N -F_1) )\\
= & \frac{1}{F_{N-1}+F_{N+1}}( 2F_{3} -F_{1} (L_{N-3}-2) -4F_{1} +2F_{1} (L_{N-1}-2) +2F_{1})\\
= & \frac{1}{F_{N-1}+F_{N+1}}( 2(F_{3} -2F_{1}) +F_{1}( 2L_{N-1} -L_{N-3}))\\
= & \frac{1}{F_{N-1}+F_{N+1}}( L_{N-1} +L_{N-2}) \\
= & \frac{L_N}{F_{N-1}+F_{N+1}} \\
= & \frac{F_{N-1}+F_{N+1}}{F_{N-1}+F_{N+1}} \\
= & 1.
\end{align*}

\item
We calculate that 
$H_{N+1}K_{N+1}(\frac{N+1}{2}, \frac{N+1}{2})=1$. 
\begin{align*}
  & \sum_{k=1}^{\frac{N+1}{2}}H_{N+1}\left(\frac{N+1}{2},k\right)K_{N+1}\left(k,\frac{N+1}{2}\right) \\
= & -2\sum_{k=1}^{\frac{N-1}{2}}K_{N+1}\left(k,\frac{N+1}{2}\right)+NK_{N+1}\left(\frac{N+1}{2},\frac{N+1}{2}\right) \\
= & \frac{1}{F_{N-1}+F_{N+1}}\left(-\sum_{k=1}^{\frac{N-1}{2}} 2(F_N-F_{N-2k}) + NF_N\right) \\
= & \frac{1}{F_{N-1}+F_{N+1}}\left(-(N-1)F_N+2\sum_{k=1}^{\frac{N-1}{2}}F_{N-2k} + NF_N\right) \\
= & \frac{1}{F_{N-1}+F_{N+1}}\left( F_N +2\sum_{k=0}^{\frac{N-3}{2}}F_{2k+1} \right)\\
= & \frac{1}{F_{N-1}+F_{N+1}}(F_N+2F_{N-1} )\\
= & \frac{1}{F_{N-1}+F_{N+1}}(F_{N-1}+F_{N+1} )\\
= & 1.
\end{align*}

\item
We calculate that 
$H_{N+1}K_{N+1}(\ell, \ell)=1$, where $1<\ell<\frac{N-1}{2}$. 
\begin{align*}
& \sum_{k=1}^{\frac{N+1}{2}}H_{N+1}(\ell,k)K_{N+1}(k,\ell) \\
= & -K_{N+1}(\ell-1,\ell) +3K_{N+1}(\ell,\ell) -K_{N+1}(\ell+1,\ell) -K_{N+1}\left(\frac{N+1}{2},\ell\right) \\
= & \frac{1}{F_{N-1}+F_{N+1}}(-2(F_N-F_{N+2-2\ell}) -F_{N-2\ell}(L_{2\ell-2}-2) +6(F_N-F_{N-2\ell}) +3F_{N-2\ell}(L_{2\ell}-2) \\
 & -2(F_N-F_{N-2\ell}) -F_{N-2-2\ell}(L_{2\ell}-2) -2( F_N -F_{N-2\ell})) \\
= & \frac{1}{F_{N-1}+F_{N+1}}( 2(F_{N+2-2\ell} -F_{N-2\ell}) +(L_{2\ell}-2) (3F_{N-2\ell} -F_{N-2-2\ell}) -F_{N-2\ell}(L_{2\ell-2}-2)) \\
= & \frac{1}{F_{N-1}+F_{N+1}}(F_{N+2-2\ell}L_{2\ell} -F_{N+1-2\ell}L_{2\ell-2}) \\
= & \frac{1}{F_{N-1}+F_{N+1}}(F_{N+2}-F_{4\ell-N-2} -F_{N-2}+F_{4\ell-N-2}) \quad \text{[by \eqref{formula5}]}\\
= & \frac{1}{F_{N-1}+F_{N+1}}(F_{N-1}+F_{N+1}) \\
= & 1.
\end{align*}
\end{itemize}

Thus, we have shown that the diagonal elements are equal to $1$.

Next, we show that the other elements are equal to $0$. 

\begin{itemize}
\item
We calculate that $H_{N+1}K_{N+1}(1, \ell)=0$, where $2 \leq \ell \leq \frac{N-1}{2}$. 
\begin{align*}
& \sum_{k=1}^{\frac{N+1}{2}}H_{N+1}(1,k)K_{N+1}(k,\ell) \\
= & 3K_{N+1}(1,\ell) -K_{N+1}(2,\ell) -K_{N+1}\left(\frac{N+1}{2},\ell\right) \\
= & \frac{1}{F_{N-1}+F_{N+1}}(6(F_N-F_{N-2})+3F_{N-2\ell}(L_2-2) \\
 & -2(F_N-F_{N-4})-F_{N-2\ell}(L_4-2) -2(F_N-F_{N-2\ell})) \\
= & \frac{1}{F_{N-1}+F_{N+1}}(2(F_N -3F_{N-2} +F_{N-4}) \\
 & +3F_{N-2\ell}(L_2-2) -F_{N-2\ell}(L_4-2) +2F_{N-2\ell}) \\
= & \frac{1}{F_{N-1}+F_{N+1}}(2(F_{N-1} -2F_{N-2} +F_{N-4}) +3F_{N-2\ell} -5F_{N-2\ell} +2F_{N-2\ell}) \\
= & \frac{1}{F_{N-1}+F_{N+1}}2(F_{N-3} -F_{N-2} +F_{N-4}) \\
= & 0.
\end{align*}

\item
We calculate that $H_{N+1}K_{N+1}(1,\frac{N+1}{2})=0$. 
\begin{align*}
  & \sum_{k=1}^{\frac{N+1}{2}}H_{N+1}(1,k)K_{N+1}\left(k,\frac{N+1}{2}\right) \\
= & 3K_{N+1}\left(1,\frac{N+1}{2}\right)-K_{N+1}\left(2,\frac{N+1}{2}\right) -K_{N+1}\left(\frac{N+1}{2},\frac{N+1}{2}\right) \\
= & \frac{1}{F_{N-1}+F_{N+1}}(3(F_N-F_{N-2}) -F_N +F_{N-4} -F_N) \\
= & \frac{1}{F_{N-1}+F_{N+1}}(F_N -3F_{N-2} +F_{N-4}) \\
= & \frac{1}{F_{N-1}+F_{N+1}}(F_{N-1} -2F_{N-2} +F_{N-4}) \\
= & \frac{1}{F_{N-1}+F_{N+1}}(F_{N-2} -F_{N-2} +F_{N-4}) \\
= & 0.
\end{align*}

\item
We calculate that $H_{N+1}K_{N+1}(\frac{N-1}{2}, \frac{N+1}{2})=0$. 
\begin{align*}
& \sum_{k=1}^{\frac{N+1}{2}}H_{N+1}\left(\frac{N-1}{2},k\right)K_{N+1}\left
(k,\frac{N+1}{2}\right) \\
= & -K_{N+1}\left(\frac{N-3}{2},\frac{N+1}{2}\right) +2K_{N+1}\left(\frac{N-1}{2},\frac{N+1}{2}\right) -K_{N+1}\left(\frac{N+1}{2},\frac{N+1}{2}\right) \\
= & \frac{1}{F_{N-1}+F_{N+1}}(-F_N +F_3 +2F_N -2F_1 -F_N) \\
= & 0.
\end{align*}

\item
We calculate that $H_{N+1}K_{N+1}(\frac{N-1}{2}, \ell)=0$, where $1\leq \ell\leq \frac{N-3}{2}$. 
\begin{align*}
& \sum_{k=1}^{\frac{N+1}{2}}H_{N+1}\left(\frac{N-1}{2},k\right)K_{N+1}(k,\ell) \\
= & -K_{N+1}\left(\frac{N-3}{2},\ell\right) +2K_{N+1}\left(\frac{N-1}{2},\ell\right) -K_{N+1}\left(\frac{N+1}{2},\ell\right) \\
= & \frac{1}{F_{N-1}+F_{N+1}} \times \\
 & (-2(F_N-F_{N-2\ell}) -F_3(L_{2\ell}-2) +4(F_N-F_{2n+1-2\ell}) +2F_1(L_{2\ell}-2) -2(F_N-F_{N-2\ell})) \\
= & \frac{1}{F_{N-1}+F_{N+1}}(-F_3(L_{2\ell}-2) +2F_1(L_{2\ell}-2)) \\
= & \frac{1}{F_{N-1}+F_{N+1}}(-2(L_{2\ell}-2) +2(L_{2\ell}-2)) \\
= & 0.
\end{align*}

\item
We calculate that $H_{N+1}K_{N+1}(\frac{N+1}{2}, \ell)=0$, where $1\leq \ell \leq \frac{N-1}{2}$. 
\begin{align*}
& \sum_{k=1}^{\frac{N+1}{2}}H_{N+1}\left(\frac{N+1}{2},k\right)K_{N+1}(k,\ell) \\
= & -2\sum_{k=1}^{\ell-1}K_{N+1}(k,\ell) -2\sum_{k=\ell}^{\frac{N-1}{2}}K_{N+1}(k,\ell) +NK_{N+1}\left(\frac{N+1}{2},\ell\right) \\
= & \frac{1}{F_{N-1}+F_{N+1}}\left(-2\sum_{k=1}^{\ell-1}(2(F_N-F_{N-2k})+F_{N-2\ell}(L_{2k}-2)) \right. \\
 & \left.-2\sum_{k=\ell}^{\frac{N-1}{2}}(2(F_N-F_{N-2\ell})+F_{N-2k}(L_{2\ell}-2)) +2N(F_N-F_{N-2\ell})\right) \\
= & \frac{1}{F_{N-1}+F_{N+1}}\left(-4F_N\sum_{k=1}^{\ell-1}1 +4\sum_{k=1}^{\ell-1}F_{N-2k}-2F_{N-2\ell}\sum_{k=1}^{\ell-1}(L_{2k}-2)\right. \\
 & \left.-4F_N\sum_{k=\ell}^{\frac{N-1}{2}}1 +4F_{N-2\ell}\sum_{k=\ell}^{\frac{N-1}{2}}1 -2(L_{2\ell}-2)\sum_{k=\ell}^{\frac{N-1}{2}}F_{N-2k} +2NF_N -2NF_{N-2\ell}\right) \\
= & \frac{1}{F_{N-1}+F_{N+1}}\left(-4F_N\sum_{k=1}^{\frac{N-1}{2}}1 +4\sum_{k=\frac{N-2\ell+1}{2}}^{\frac{N-3}{2}}F_{2k+1} -2F_{N-2\ell}\sum_{k=1}^{\ell-1}L_{2k} +4F_{N-2\ell}\sum_{k=1}^{\ell-1}1 \right. \\
 & \left. +4F_{N-2\ell}\sum_{k=\ell}^{\frac{N-1}{2}}1 -2L_{2\ell}\sum_{k=0}^{\frac{N-2\ell-1}{2}}F_{2k+1} +4\sum_{k=0}^{\frac{N-2\ell-1}{2}}F_{2k+1} +2NF_N -2NF_{N-2\ell}\right) \\
= & \frac{1}{F_{N-1}+F_{N+1}}(-2(N-1)F_N +4F_{N-1} -2F_{N-2\ell}(L_{2\ell-1}-1) \\
 & +2(N-1)F_{N-2\ell} -2L_{2\ell}F_{N+1-2\ell} +2NF_N -2NF_{N-2\ell}) \\
= & \frac{1}{F_{N-1}+F_{N+1}}(2(2F_{N-1} +F_N) -2F_{N-2\ell}L_{2\ell-1} -2L_{2\ell}F_{N+1-2\ell}) \\
= & \frac{1}{F_{N-1}+F_{N+1}}(2(F_{N-1} +F_{N+1}) -2(F_{N-1}-F_{N+1-4\ell}) -2(F_{N+1}+F_{N+1-4\ell})) &&\text{[by \eqref{formula5}]}\\
= & 0. \\
\end{align*}

Finally, we show that all non-diagonal elements equal $0$, thereby completing the proof of Proposition~$1$.
\item
For $1<j<i<\frac{N-1}{2}$, we have 
\begin{align*}
& \sum_{k=1}^{\frac{N+1}{2}}H_{N+1}(i,k)K_{N+1}(k,j) \\
= & -K_{N+1}(i-1,j) +3K_{N+1}(i,j) -K_{N+1}(i+1,j) -K_{N+1}\left(\frac{N+1}{2},j\right) \\
= & \frac{1}{F_{N-1}+F_{N+1}}(-2(F_N-F_{N-2j}) -F_{N+2-2i}(L_{2j}-2) +6(F_N-F_{N-2j}) +3F_{N-2i}(L_{2j}-2) \\
 & -2(F_N-F_{N-2j}) -F_{N-2-2i}(L_{2j}-2) -2(F_N-F_{N-2j})) \\
= & \frac{1}{F_{N-1}+F_{N+1}}((L_{2j}-2)(-F_{N+2-2i} +3F_{N-2i} -F_{N-2-2i})) \\
= & \frac{1}{F_{N-1}+F_{N+1}}((L_{2j}-2)(-F_{N-1-2i} +F_{N-2i} -F_{N-2-2i})) \\
= & 0.
\end{align*}

\item
For $1<i< j<\frac{N-1}{2}$, we have
\begin{align*}
& \sum_{k=1}^{\frac{N+1}{2}}H_{N+1}(i,k)K_{N+1}(k,j) \\
= & -K_{N+1}(i-1,j) +3K_{N+1}(i,j) -K_{N+1}(i+1,j) -K_{N+1}\left(\frac{N+1}{2},j\right) \\
= & \frac{1}{F_{N-1}+F_{N+1}}(-2(F_N-F_{N+2-2i}) -F_{N-2j}(L_{2i-2}-2) +6(F_N-F_{N-2i}) +3F_{N-2j}(L_{2i}-2) \\
 & -2(F_N-F_{N-2-2i}) -F_{N-2j}(L_{2i+2}-2) -2(F_N-F_{N-2j})) \\
= & \frac{1}{F_{N-1}+F_{N+1}} \times \\
 & (2(F_{N+2-2i} -2F_{N-2i} +F_{N-2-2i}) -F_{N-2j}(L_{2i-2}-2 -3(L_{2i}-2) +L_{2i+2}-2)) \\
= & \frac{1}{F_{N-1}+F_{N+1}}(2F_{N-2i} -F_{N-2j}(L_{2i-2} -3L_{2i} +L_{2i+2} +2)) \\
= & \frac{1}{F_{N-1}+F_{N+1}}(-F_{N-2j}(L_{2i-2} -L_{2i} +L_{2i-1})) \\
= & 0.
\end{align*}
 
\end{itemize}
Thus, we have shown that all non-diagonal elements are $0$. Therefore, $K_{N+1}=H_{N+1}^{-1}$ is proven.
\end{proof}

By combining the above proposition and the matrix equation, we obtain the exact formula for the average hitting times of simple random walks on $W_{N+1}$ when the number of vertices in the cycle part is odd as follows.

For $1\leq \ell \leq \lfloor\frac{N}{2}\rfloor$, we have
\begin{align*}
h(W_{N+1};0,\ell)
& = 3\sum_{k=1}^{\frac{N-1}{2}}H_{N+1}^{-1}(\ell,k)+NH_{N+1}^{-1}\left(\ell,\frac{N+1}{2}\right) \\
& =\frac{1}{F_{N-1}+F_{N+1}} \left(3\sum_{k=1}^{\ell}(2(F_N-F_{N-2k})+F_{N-2\ell}(L_{2k}-2))\right. \\
& \left. +3\sum_{k=\ell+1}^{\frac{N-1}{2}}(2(F_N-F_{N-2\ell})+F_{N-2k}(L_{2\ell}-2))+N(F_N-F_{N-2\ell})\right) \\
& =\frac{1}{F_{N-1}+F_{N+1}}\left(\sum_{k=1}^{\frac{N-1}{2}}6F_N-\sum_{k=1}^{\frac{N-1}{2}}6F_{N-2k}+\sum_{k=1}^{\ell}3F_{N-2\ell}(L_{2k}-2) \right. \\
& \left. -\sum_{k=1}^{\frac{N-2\ell-1}{2}}6F_{N-2\ell}+3L_{2\ell}\sum_{k=1}^{\frac{N-2\ell-1}{2}}F_{2k-1}+N(F_N-F_{N-2\ell})\right) \\
& =\frac{1}{F_{N-1}+F_{N+1}}\left(3(N-1)F_N-\sum_{k=1}^{\frac{N-1}{2}}6F_{2k-1}+3F_{N-2\ell}\sum_{k=1}^{\ell}L_{2k}-6\ell F_{N-2\ell} \right. \\
& \left. -3(N-2\ell-1)F_{N-2\ell}+3L_{2\ell}\sum_{k=1}^{\frac{N-2\ell-1}{2}}F_{2k-1}+N(F_N-F_{N-2\ell})\right) \\
& =\frac{1}{F_{N-1}+F_{N+1}} (3(N-1)F_N-6F_{N-1}+3F_{N-2\ell}(L_{2\ell+1}-1)-6\ell F_{N-2\ell} \\
& -3(N-2\ell-1)F_{N-2\ell}+3L_{2\ell}F_{N-2\ell-1}+N(F_N-F_{N-2\ell})) \quad\text{[by \eqref{Fib8} and \eqref{Luc9}]}\\
& =\frac{1}{F_{N-1}+F_{N+1}} ((4N-3)F_N-6F_{N-1}-4NF_{N-2\ell} \\
& +3(F_{N-2\ell}L_{2\ell+1}+L_{2\ell}F_{N-2\ell-1})) \\
& =\frac{1}{F_{N-1}+F_{N+1}} ((4N-3)F_N-6F_{N-1}-4NF_{N-2\ell}+3L_N) \quad \text{[by \eqref{formula6}]}\\
& =\frac{1}{F_{N-1}+F_{N+1}} ((4N-3)F_N-6F_{N-1}-4NF_{N-2\ell}+3(F_{N-1}+F_{N+1}))\\
& =\frac{4N(F_N-F_{N-2\ell})}{F_{N-1}+F_{N+1}}.
\end{align*}

By \eqref{Fib12}, we have $F_{N-2\ell}=F_{2\ell-N}$.
Therefore, by combining this with $h(W_{N+1};0,\ell)=h(W_{N+1};0,N-\ell)$, we obtain 
\[
h(W_{N+1};0,\ell)=\frac{4N(F_N-F_{N-2\ell})}{F_{N-1}+F_{N+1}} \quad \text{for }0\leq \ell\leq N-1.
\]

Also, we have
\begin{align*}
h_{N,0}
= & 3\sum_{k=1}^{\frac{N-1}{2}}H_{N+1}^{-1}\left(\frac{N+1}{2},k\right)+NH_{N+1}^{-1}\left(\frac{N+1}{2},\frac{N+1}{2}\right) \\
= & \frac{1}{F_{N-1}+F_{N+1}}\left(6\sum_{k=1}^{\frac{N-1}{2}}(F_N-F_{N-2k}) +NF_N\right) \\
= & \frac{1}{F_{N-1}+F_{N+1}}\left(6F_N\sum_{k=1}^{\frac{N-1}{2}}1 -6\sum_{k=0}^{\frac{N-3}{2}}F_{2k+1} +NF_N\right) \\
= & \frac{1}{F_{N-1}+F_{N+1}} (3(N-1)F_N -6F_{N-1} +NF_N) \\
= & \frac{1}{F_{N-1}+F_{N+1}} ((4N-3)F_N -6F_{N-1}) \\
= & \frac{1}{F_{N-1}+F_{N+1}} ((4N-3)F_{N+1} -(4N+3)F_{N-1}).
\end{align*}

\subsection{Even \label{subsec042}}

Next, we consider the case where the wheel graph $W_{N+1}$ has an even number of vertices in its cycle part. 
In this case, the coefficient matrix $H_{N+1}$ is defined similarly to the case of an odd number of vertices, as follows: 

\begin{center}
$H_{N+1}(i,j)=
\begin{cases}
3 & \text{if } i=j\ ,\ i\neq\frac{N+2}{2},\\
N & \text{if } i=j=\frac{N+2}{2},\\
-1 & \text{if } |i-j|=1,( i,j) \neq ( \frac{N}{2},\frac{N-2}{2}),\\
 & j=\frac{N+2}{2},( i,j) \neq ( \frac{N+2}{2},\frac{N+2}{2}),\\
-2 & \text{if } i=\frac{N+2}{2},( i,j) \neq ( \frac{N+2}{2},\frac{N}{2}),\\
 & ( i,j) =( \frac{N}{2},\frac{N-2}{2}),\\
0 & \mbox{otherwise.}
\end{cases}$
\end{center}

It is important to note that similar to the case with an odd number of vertices, the coefficient matrix is again a square matrix of size $\frac{N+2}{2} \times \frac{N+2}{2}$.
The last row represents the average hitting times from the center to the peripheral vertices. 

For example, the coefficient matrix for $W_{8+1}$ can be written as follows: 
\begin{align*}
H_{8+1}
&=
\begin{bmatrix}
3 & -1 & 0 & 0 & -1\\
-1 & 3 & -1 & 0 & -1\\
0 & -1 & 3 & -1 & -1\\
0 & 0 & -2 & 3 & -1\\
-2 & -2 & -2 & -1 & 8
\end{bmatrix}.
\end{align*}

\begin{prop} \label{prop:2}
The inverse of the coefficient matrix $H_{N+1}$ for a wheel graph with an even number of vertices in the cycle part can be expressed as follows: 

\[
H_{N+1}^{-1}(i,j)
=\frac{1}{L_{N-1}+L_{N+1}}
\begin{cases}
2(L_{N} -L_{N-2i}) +L_{N-2j}(L_{2i}-2) & \text{if } 1\leq i\leq j \leq \frac{N-2}{2},\\
2(L_{N} -L_{N-2j}) +L_{N-2i}(L_{2j}-2) & \text{if } 1\leq j< i \leq \frac{N}{2},\\
L_{N} -L_{N-2i} +L_{2i}-2 & \text{if } j=\frac{N}{2}, 1\leq i\leq \frac{N}{2}, \\
2(L_{N} -L_{N-2j}) & \text{if } i=\frac{N+2}{2},\ 1\leq j\leq \frac{N-2}{2},\\
L_{N}-2 & \text{if } (i, j)=(\frac{N+2}{2}, \frac{N}{2}), \\
L_{N} -L_{N-2i} & \text{if } j=\frac{N+2}{2},\ 1\leq i\leq \frac{N}{2},\\
L_{N} & \text{if } i=j=\frac{N+2}{2}.
\end{cases}
\]
\end{prop}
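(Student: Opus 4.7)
The plan is to mirror the proof of Proposition~\ref{prop1}: define a candidate matrix $K_{N+1}$ as the right-hand side of the claimed formula and verify entry by entry that $H_{N+1}K_{N+1}=I_{(N+2)/2}$. Because $H_{N+1}$ is tridiagonal on its first $N/2$ rows (with an extra border column at $j=(N+2)/2$) and is dense only in the last row, each entry of the product expands into at most four nontrivial $K_{N+1}$-terms, apart from the dense bottom-row cases where I would reorganize the resulting finite sums as in the odd argument.

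First, I would verify the diagonal entries $(1,1)$, $(\ell,\ell)$ for $2\le \ell\le (N-2)/2$, $(N/2,N/2)$, and $((N+2)/2,(N+2)/2)$. Then I would verify the off-diagonal entries, splitting according to whether the column index is strictly below, strictly above, or equal to the special indices $N/2$ and $(N+2)/2$. In each case, after pulling out common Lucas-number factors, the identity should collapse by a combination of the three-term recursions \eqref{Fib1} and \eqref{Fib2}, the symmetry \eqref{Luc11}, the product formula \eqref{formula5}, the Lucas-Fibonacci identity \eqref{formula6}, and the partial sums \eqref{Fib8} and \eqref{Luc9}, ultimately simplifying to the common denominator $L_{N-1}+L_{N+1}$. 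The Lucas analogue of the reduction $F_{N-1}+F_{N+1}=L_N$ used in the odd case should play the same closing role at the end of each diagonal verification.

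The main obstacle I anticipate is the irregular boundary introduced by the evenness of the cycle. The row $i=N/2$ of $H_{N+1}$ has the exceptional entry $-2$ in column $(N-2)/2$, and correspondingly the column $j=N/2$ of $K_{N+1}$ uses the anomalous formula $L_N - L_{N-2i} + L_{2i}-2$, while the single entry $((N+2)/2, N/2) = L_N - 2$ breaks the symmetric pattern of the bottom row. Verifying the products involving the index $N/2$ will require extra bookkeeping of the $L_{2i}-2$ terms, and I expect \eqref{Luc10} (which rewrites $L_{2n}-2 = L_n^2$) to be the key tool for making these extra pieces cancel cleanly against contributions from the neighboring row $N/2-1$ and from the last row. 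Once these boundary rows and columns are handled, the remaining cases follow the odd proof almost verbatim.
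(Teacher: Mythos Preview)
Your proposal is correct and follows essentially the same approach as the paper: define $K_{N+1}$ by the claimed formula and verify $H_{N+1}K_{N+1}=I_{(N+2)/2}$ by a case-by-case check of diagonal and off-diagonal entries, handling the special rows and columns $N/2$ and $(N+2)/2$ separately. One minor remark: the boundary cases at $j=N/2$ in the paper are disposed of by direct substitution of small Lucas values ($L_0=2$, $L_2=3$, $L_4=7$) together with \eqref{Fib2} and \eqref{Luc9}, so \eqref{Luc10} turns out not to be needed there.
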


\begin{proof} 
The matrix $K_{N+1}$ is defined as follows: 

\[
K_{N+1}(i,j)
:=\frac{1}{L_{N-1}+L_{N+1}}
\begin{cases}
2(L_{N} -L_{N-2i}) +L_{N-2j}(L_{2i}-2) & \text{if } 1\leq i\leq j\leq\frac{N-2}{2},\\
2(L_{N} -L_{N-2j}) +L_{N-2i}(L_{2j}-2) & \text{if } 1\leq j< i\leq \frac{N}{2},\\
L_{N} -L_{N-2i} +L_{2i}-2 & \text{if } j=\frac{N}{2}, 1\leq i\leq \frac{N}{2}, \\
2(L_{N} -L_{N-2j}) & \text{if } i=\frac{N+2}{2},\ 1\leq j\leq \frac{N-2}{2},\\
L_{N}-2 & \text{if } (i, j)=(\frac{N+2}{2}, \frac{N}{2}), \\
L_{N} -L_{N-2i} & \text{if } j=\frac{N+2}{2},\ 1\leq i\leq \frac{N}{2},\\
L_{N} & \text{if } i=j=\frac{N+2}{2}.
\end{cases}
\]

We will show that 
$H_{N+1}K_{N+1}=I_{\frac{N+2}{2}}$. 
First, we show that the diagonal elements are equal to $1$. 

\begin{itemize}
\item
We calculate that 
$H_{N+1}K_{N+1}(1,1)=1$. 
\begin{align*}
&\sum_{k=1}^{\frac{N+2}{2}}H_{N+1}(1,k)K_{N+1}(k,1) \\
= &
3K_{N+1}(1,1)-K_{N+1}(2,1)-K_{N+1}\left(\frac{N+2}{2},1\right) \\
= & \frac{1}{L_{N-1}+L_{N+1}} \times \\
 & (3(2(L_N-L_{N-2})+L_{N-2}(L_2-2))-(2(L_N-L_{N-2})+L_{N-4}(L_2-2))-2(L_N-L_{N-2})) \\
= & \frac{1}{L_{N-1}+L_{N+1}}(2(L_N-L_{N-2})+3L_{N-2}(L_2-2)-L_{N-4}(L_2-2)) \\
= & \frac{1}{L_{N-1}+L_{N+1}}(2L_N+L_{N-2}-L_{N-4}) \\
= & \frac{1}{L_{N-1}+L_{N+1}}(L_{N-1}+L_{N+1}) \\
= & 1.
\end{align*}

\item
We calculate that 
$H_{N+1}K_{N+1}\left(\frac{N}{2},\frac{N}{2}\right)=1$. 
\begin{align*}
&\sum_{k=1}^{\frac{N+2}{2}}H_{N+1}\left(\frac{N}{2},k\right)K_{N+1}\left(k,\frac{N}{2}\right) \\
= & -2K_{N+1}\left(\frac{N-2}{2},\frac{N}{2}\right)+3K_{N+1}\left(\frac{N}{2},\frac{N}{2}\right)-K_{N+1}\left(\frac{N+2}{2},\frac{N}{2}\right) \\
= & \frac{1}{L_{N-1}+L_{N+1}}(-2(L_N-L_2+L_{N-2}-2)+6(L_N-2)-(L_N-2)) \\
= & \frac{1}{L_{N-1}+L_{N+1}}(-2L_{N-2}+3L_N) \\
= & \frac{1}{L_{N-1}+L_{N+1}}(L_{N-1}+L_{N+1}) \\
= & 1.
\end{align*}

\item
We calculate that 
$H_{N+1}K_{N+1}(\frac{N+2}{2},\frac{N+2}{2})=1$. 
\begin{align*}
&\sum_{k=1}^{\frac{N+2}{2}}H_{N+1}\left(\frac{N+2}{2},k\right)K_{N+1}\left(k,\frac{N+2}{2}\right) \\
= & -2\sum_{k=1}^{\frac{N-2}{2}}K_{N+1}\left(k,\frac{N+2}{2}\right)-K_{N+1}\left(\frac{N}{2},\frac{N+2}{2}\right)+NK_{N+1}\left(\frac{N+2}{2},\frac{N+2}{2}\right) \\
= & \frac{1}{L_{N-1}+L_{N+1}}\left(-2\sum_{k=1}^{\frac{N-2}{2}}(L_N-L_{N-2k})-(L_N-L_0)+NL_N\right) \\
= & \frac{1}{L_{N-1}+L_{N+1}}\left(-(N-2)L_N+2\sum_{k=1}^{\frac{N-2}{2}}L_{2k}-(L_N-2)+NL_N\right) \\
= & \frac{1}{L_{N-1}+L_{N+1}}(-(N-2)L_N+2(L_{N-1}-1)-(L_N-2)+NL_N) &&\text{[by \eqref{Luc9}]}\\
= & \frac{1}{L_{N-1}+L_{N+1}}(2L_{N-1}+L_N) \\
= & 1.
\end{align*}

\item
We calculate that 
$H_{N+1}K_{N+1}(\ell,\ell)=1$, where $1< \ell<\frac{N}{2}$.
\begin{align*}
&\sum_{k=1}^{\frac{N+2}{2}}H_{N+1}(\ell,k)K_{N+1}(k,\ell) \\
= &
-K_{N+1}(\ell-1,\ell)+3K_{N+1}(\ell,\ell)-K_{N+1}(\ell+1,\ell)-K_{N+1}\left(\frac{N+2}{2},\ell\right) \\
= &\frac{1}{L_{N-1}+L_{N+1}}(-2(L_N-L_{N-2\ell+2})-L_{N-2\ell}(L_{2\ell-2}-2)+6(L_N-L_{N-2\ell})+3L_{N-2\ell}(L_{2\ell}-2) \\
 & -2(L_N-L_{N-2\ell})-L_{N-2\ell-2}(L_{2\ell}-2)-2(L_N-L_{N-2\ell})) \\
= &\frac{1}{L_{N-1}+L_{N+1}}(2L_{N-2\ell+2}-6L_{N-2\ell}-L_{N-2\ell}(L_{2\ell-2}-3L_{2\ell})-L_{N-2\ell-2}(L_{2\ell}-2)) \\
= &\frac{1}{L_{N-1}+L_{N+1}}(2L_{N-2\ell+2}-6L_{N-2\ell}+2L_{N-2\ell-2}-L_{N-2\ell}(L_{2\ell-2}-3L_{2\ell})-L_{N-2\ell-2}L_{2\ell}) \\
= &\frac{1}{L_{N-1}+L_{N+1}}(L_{N-2\ell}L_{2\ell+2}-L_{N-2\ell-2}L_{2\ell}) \quad \text{[by \eqref{Fib2}]}\\
= &\frac{1}{L_{N-1}+L_{N+1}}(L_{N-2\ell}L_{2\ell+1}+L_{N-2\ell-1}L_{2\ell})\\
= &\frac{1}{L_{N-1}+L_{N+1}}(L_{N-2\ell}F_{2\ell+2}+L_{N-2\ell-1}F_{2\ell+1}+L_{N-2\ell}F_{2\ell}+L_{N-2\ell-1}F_{2\ell-1})\\
= &\frac{1}{L_{N-1}+L_{N+1}}(L_{N-1}+L_{N+1}) \quad \text{[by \eqref{formula6}]}\\
= & 1.
\end{align*}
\end{itemize}

Therefore, we have shown that all diagonal elements are equal to $1$.
Finally, we show that the other elements of the matrix are equal to $0$.

\begin{itemize}
\item
We calculate that 
$H_{N+1}K_{N+1}(1,\ell)=0$, where $1< \ell<\frac{N}{2}$.
\begin{align*}
\sum_{k=1}^{\frac{N+2}{2}}H_{N+1}(1,k)K_{N+1}(k,\ell)
= & 3K_{N+1}(1,\ell)-K_{N+1}(2,\ell)-K_{N+1}\left(\frac{N+2}{2},\ell\right) \\
= & \frac{1}{L_{N-1}+L_{N+1}}(3(2(L_N-L_{N-2})+L_{N-2\ell}(L_2-2)) \\
 & -(2(L_N-L_{N-4})+L_{N-2\ell}(L_4-2))-2(L_N-L_{N-2\ell})) \\
= & \frac{1}{L_{N-1}+L_{N+1}}(2L_N-6L_{N-2}+2L_{N-4}) \\
= & 0. &&\text{[by \eqref{Fib1}]}
\end{align*}

\item
We calculate that 
$H_{N+1}K_{N+1}(1,\frac{N}{2})=0$.
\begin{align*}
&\sum_{k=1}^{\frac{N+2}{2}}H_{N+1}(1,k)K_{N+1}\left(k,\frac{N}{2}\right) \\
= & 3K_{N+1}\left(1,\frac{N}{2}\right)-K_{N+1}\left(2,\frac{N}{2}\right)-K_{N+1}\left(\frac{N+2}{2},\frac{N}{2}\right) \\
= & \frac{1}{L_{N-1}+L_{N+1}}(3(L_N-L_{N-2}+L_2-2)-(L_N-L_{N-4}+L_4-2)-(L_N-2)) \\
= & \frac{1}{L_{N-1}+L_{N+1}}(L_N-3L_{N-2}+L_{N-4}) \\
= & 0. &&\text{[by \eqref{Fib2}]}
\end{align*}

\item
We calculate that 
$H_{N+1}K_{N+1}(1,\frac{N+2}{2})=0$.
\begin{align*}
&\sum_{k=1}^{\frac{N+2}{2}}H_{N+1}(1,k)K_{N+1}\left(k,\frac{N+2}{2}\right) \\
= & 3K_{N+1}\left(1,\frac{N+2}{2}\right)-K_{N+1}\left(2,\frac{N+2}{2}\right)-K_{N+1}\left(\frac{N+2}{2},\frac{N+2}{2}\right) \\
= & \frac{1}{L_{N-1}+L_{N+1}}(3(L_N-L_{N-2})-(L_N-L_{N-4})-L_N) \\
= & \frac{1}{L_{N-1}+L_{N+1}}(L_N-3L_{N-2}+L_{N-4}) \\
= & 0. &&\text{[by \eqref{Fib2}]}
\end{align*}

\item
We calculate that 
$H_{N+1}K_{N+1}(\frac{N}{2},\ell)=0$, where $1\leq \ell<\frac{N}{2}$.
\begin{align*}
&\sum_{k=1}^{\frac{N+2}{2}}H_{N+1}\left(\frac{N}{2},k\right)K_{N+1}(k,\ell) \\
= & -2K_{N+1}\left(\frac{N-2}{2},\ell\right)+3K_{N+1}\left(\frac{N}{2},\ell\right)-K_{N+1}\left(\frac{N+2}{2},\ell\right) \\
= & \frac{1}{L_{N-1}+L_{N+1}} \times \\
 & (-2(2(L_N-L_{N-2\ell})+L_2(L_{2\ell}-2))+3(2(L_N-L_{N-2\ell})+L_0(L_{2\ell}-2))-2(L_N-L_{N-2\ell})) \\
= & \frac{1}{L_{N-1}+L_{N+1}} \times \\
 & (-4(L_N-L_{N-2\ell})-6(L_{2\ell}-2)+6(L_N-L_{N-2\ell})+6(L_{2\ell}-2)-2(L_N-L_{N-2\ell})) \\
= & 0.
\end{align*}

\item
We calculate that  
$H_{N+1}K_{N+1}(\frac{N}{2},\frac{N+2}{2})=0$.
\begin{align*}
&\sum_{k=1}^{\frac{N+2}{2}}H_{N+1}\left(\frac{N}{2},k\right)K_{N+1}\left(k,\frac{N+2}{2}\right) \\
= & -2K_{N+1}\left(\frac{N-2}{2},\frac{N+2}{2}\right)+3K_{N+1}\left(\frac{N}{2},\frac{N+2}{2}\right)-K_{N+1}\left(\frac{N+2}{2},\frac{N+2}{2}\right) \\
= & \frac{1}{L_{N-1}+L_{N+1}}(-2(L_N-L_2)+3(L_N-L_0)-L_N) \\
= & \frac{1}{L_{N-1}+L_{N+1}}(-2(L_N-3)+3(L_N-2)-L_N) \\
= & 0.
\end{align*}

\item
We calculate that  
$H_{N+1}K_{N+1}\left(\frac{N+2}{2},\ell\right)=0$, where $1\leq \ell<\frac{N}{2}$.
\begin{align*}
&\sum_{k=1}^{\frac{N+2}{2}}H_{N+1}\left(\frac{N+2}{2},k\right)K_{N+1}(k,\ell) \\
= & -2\sum_{k=1}^{\frac{N-2}{2}}K_{N+1}(k,\ell)-K_{N+1}\left(\frac{N}{2},\ell\right)+NK_{N+1}\left(\frac{N+2}{2},\ell\right) \\
= & \frac{1}{L_{N-1}+L_{N+1}} \times \\
 & \left(-2\left(\sum_{k=1}^{\ell-1}(2(L_N-L_{N-2k})+L_{N-2\ell}(L_{2k}-2))+\sum_{k=\ell}^{\frac{N-2}{2}}(2(L_N-L_{N-2\ell})+L_{N-2k}(L_{2\ell}-2))\right)\right. \\
 & -(2(L_N-L_{N-2\ell})+L_0(L_{2\ell}-2))+2N(L_N-L_{N-2\ell}) \Bigg) \\
= & \frac{1}{L_{N-1}+L_{N+1}}\left(-4L_N\sum_{k=1}^{\ell-1}1+4\sum_{k=1}^{\ell-1}L_{N-2k}-2L_{N-2\ell}\sum_{k=1}^{\ell-1}L_{2k}+4L_{N-2\ell}\sum_{k=1}^{\ell-1}1-4L_N\sum_{k=\ell}^{\frac{N-2}{2}}1 \right.\\
 & \left.+4L_{N-2\ell}\sum_{k=\ell}^{\frac{N-2}{2}}1
-2(L_{2\ell}-2)\sum_{k=\ell}^{\frac{N-2}{2}}L_{N-2k}-2L_N+2L_{N-2\ell}-2L_{2\ell}+4+2N(L_N-L_{N-2\ell})\right) \\
= & \frac{1}{L_{N-1}+L_{N+1}}\left(-4(\ell-1)L_N+4\sum_{k=\frac{N-2\ell+2}{2}}^{\frac{N-2}{2}}L_{2k}-2L_{N-2\ell}\sum_{k=1}^{\ell-1}L_{2k}+4(\ell-1)L_{N-2\ell}-2(N-2\ell)L_N \right.\\
 & \left. +2(N-2\ell)L_{N-2\ell}
-2(L_{2\ell}-2)\sum_{k=1}^{\frac{N-2\ell}{2}}L_{2k}-2L_N+2L_{N-2\ell}-2L_{2\ell}+4+2N(L_N-L_{N-2\ell})\right) \\
= & \frac{2}{L_{N-1}+L_{N+1}} \times \\
 & (2(L_{N-1}-1)-L_{N-2\ell}(L_{2\ell-1}-1)-L_{2\ell}(L_{N-2\ell+1}-1)+L_N-L_{N-2\ell}-L_{2\ell}+2) \quad \text{[by \eqref{Luc9}]} \\\\
= & \frac{2}{L_{N-1}+L_{N+1}}(2L_{N-1}+L_N-L_{N-2\ell}L_{2\ell-1}-L_{2\ell}L_{N-2\ell+1}) \\
= & \frac{2}{L_{N-1}+L_{N+1}}(2L_{N-1}+L_N-L_{N-2\ell}(F_{2\ell-2}+F_{2\ell})-(F_{2\ell-1}+F_{2\ell+1})L_{N-2\ell+1})\\
= & \frac{2}{L_{N-1}+L_{N+1}}(L_{N-1}+L_{N+1}-L_{N-1}-L_{N+1}) \quad \text{[by \eqref{formula6}]} \\
= & 0.
\end{align*}

\item
We calculate that 
$H_{N+1}K_{N+1}(\frac{N+2}{2},\frac{N}{2})=0$.
\begin{align*}
&\sum_{k=1}^{\frac{N+2}{2}}H_{N+1}\left(\frac{N+2}{2},k\right)K_{N+1}\left(k,\frac{N}{2}\right) \\
= & -2\sum_{k=1}^{\frac{N-2}{2}}K_{N+1}\left(k,\frac{N}{2}\right)-K_{N+1}\left(\frac{N}{2},\frac{N}{2}\right)+NK_{N+1}\left(\frac{N+2}{2},\frac{N}{2}\right) \\
= & \frac{1}{L_{N-1}+L_{N+1}}\left(-2\sum_{k=1}^{\frac{N-2}{2}}(L_N-L_{N-2k}+L_{2k}-2)-2(L_N-2)+N(L_N-2)\right)\\
= & \frac{1}{L_{N-1}+L_{N+1}}\left((2\sum_{k=1}^{\frac{N-2}{2}}L_{N-2k}-2\sum_{k=1}^{\frac{N-2}{2}}L_{2k}-2(L_N-2)\sum_{k=1}^{\frac{N-2}{2}}1-2(L_N-2)+N(L_N-2)\right) \\
= & \frac{1}{L_{N-1}+L_{N+1}}\left(2\sum_{k=1}^{\frac{N-2}{2}}L_{2k}-2\sum_{k=1}^{\frac{N-2}{2}}L_{2k}-(N-2)(L_N-2)-2(L_N-2)+N(L_N-2)\right) \\
= & 0.
\end{align*}

Finally, we show that all non-diagonal elements are equal to $0$, thereby completing the proof of Proposition~\ref{prop:2}. 
\item
We calculate that 
$H_{N+1}K_{N+1}(i,j)=0$, where $1<j<i<\frac{N}{2}$.
\begin{align*}
&\sum_{k=1}^{\frac{N+2}{2}}H_{N+1}(i,k)K_{N+1}(k,j) \\
= &-K_{N+1}(i-1,j)+3K_{N+1}(i,j)-K_{N+1}(i+1,j)-K_{N+1}\left(\frac{N+2}{2},j\right) \\
= & \frac{1}{L_{N-1}+L_{N+1}}(-(2(L_N-L_{N-2j})+L_{N-2i+2}(L_{2j}-2))+3(2(L_N-L_{N-2j})+L_{N-2i}(L_{2j}-2)) \\
 & -(2(L_N-L_{N-2j})+L_{N-2i-2}(L_{2j}-2))-2(L_N-L_{N-2j})) \\
= & \frac{1}{L_{N-1}+L_{N+1}}(2(L_{N-2i+2}-3L_{N-2i}+L_{N-2i-2})-L_{2j}(L_{N-2i+2}-3L_{N-2i}+L_{N-2i-2})) \\
= & 0. \quad \text{[by \eqref{Fib1}]}
\end{align*}

\item
We calculate that 
$H_{N+1}K_{N+1}(i,j)=0$, where $1<i<j<\frac{N}{2}$.
\begin{align*}
&\sum_{k=1}^{\frac{N+2}{2}}H_{N+1}(i,k)K_{N+1}(k,j) \\
= & -K_{N+1}(i-1,j)+3K_{N+1}(i,j)-K_{N+1}(i+1,j)-K_{N+1}\left(\frac{N+2}{2},j\right) \\
= & \frac{1}{L_{N-1}+L_{N+1}}(-(2(L_N-L_{N-2i+2})+L_{N-2j}(L_{2i-2}-2))+3(2(L_N-L_{N-2i})+L_{N-2j}(L_{2i}-2)) \\
 & -(2(L_N-L_{N-2i-2})+L_{N-2j}(L_{2i+2}-2))-2(L_N-L_{N-2j})) \\
= & \frac{1}{L_{N-1}+L_{N+1}}(2(L_{N-2i+2}-3L_{N-2i}+L_{N-2i-2})-L_{N-2j}(L_{2i-2}+3L_{2i}-L_{2i+2}) \\
 & +2(L_{N-2j}-3L_{N-2j}+L_{N-2j})) \\
= & 0. \quad \text{[by \eqref{Fib1}]}
\end{align*}

\end{itemize}
Thus, it has been shown that all non-diagonal elements are $0$. Therefore, $K_{N+1} =H_{N+1}^{-1}$ is proven. 
\end{proof}

By combining the above proposition and the matrix equation, we obtain the exact formula for the average hitting times of simple random walks on $W_{N+1}$ when the number of vertices in the cycle part is even as follows.

For $1\leq \ell\leq \lfloor\frac{N}{2}\rfloor$, we have 
\begin{align*}
h(W_{N+1};0,\ell)
= & 3\sum_{k=1}^{\frac{N-2}{2}}H_{N+1}^{-1}(\ell,k)+3H_{N+1}^{-1}\left(\ell,\frac{N}{2}\right)+NH_{N+1}^{-1}\left(\ell,\frac{N+2}{2}\right) \\
= & \frac{1}{L_{N-1}+L_{N+1}}\Bigg(3\sum_{k=1}^{\ell}(2(L_N-L_{N-2k})+L_{N-2\ell}(L_{2k}-2)) \\
 & +3\sum_{k=\ell+1}^{\frac{N-2}{2}}(2(L_N-L_{N-2\ell})+L_{N-2k}(L_{2\ell}-2)) \\
 & +3(L_N-L_{N-2\ell}+L_{2\ell}-2)+N(L_N-L_{N-2\ell})\Bigg) \\
= & \frac{1}{L_{N-1}+L_{N+1}}\left(6(L_N-L_{N-2\ell})\sum_{k=1}^{\frac{N-2}{2}}1-6\sum_{k=1}^{\frac{N-2}{2}}L_{N-2k}+3L_{N-2\ell}\sum_{k=1}^{\ell}L_{2k} \right.\\
 & \left.+3L_{2\ell}\sum_{k=\ell+1}^{\frac{N-2}{2}}L_{N-2k}+3(L_{2\ell}-2)+(N+3)(L_N-L_{N-2\ell})\right) \\
= & \frac{1}{L_{N-1}+L_{N+1}}\left(3(N-2)(L_N-L_{N-2\ell})-6\sum_{k=1}^{\frac{N-2}{2}}L_{2k}+3L_{N-2\ell}(L_{2\ell+1}-1) \right. \\
 & \left. +3L_{2\ell}\sum_{k=1}^{\frac{N-2\ell-2}{2}}L_{2k}+3(L_{2\ell}-2)+(N+3)(L_N-L_{N-2\ell})\right) &&\text{[by \eqref{Luc9}]}\\
= & \frac{1}{L_{N-1}+L_{N+1}}(-6(L_{N-1}-1)+3L_{N-2\ell}(L_{2\ell+1}-1) \\
 & +3L_{2\ell}(L_{N-2\ell-1}-1)+3(L_{2\ell}-2)+(4N-3)(L_N-L_{N-2\ell})) &&\text{[by \eqref{Luc9}]}\\
= & \frac{1}{L_{N-1}+L_{N+1}}(-3(L_{N-1}+L_{N+1})+4N(L_N-L_{N-2\ell}) \\
 & +3(L_{N-2\ell}(F_{2\ell+2}+F_{2\ell})+(F_{2\ell-1}+F_{2\ell+1})L_{N-2\ell-1})) \\
= & \frac{1}{L_{N-1}+L_{N+1}}(-3(L_{N-1}+L_{N+1})+4N(L_N-L_{N-2\ell}) \\
 & +3(F_{2\ell+2}L_{N-2\ell}+F_{2\ell+1}L_{N-2\ell-1}+F_{2\ell}L_{N-2\ell}+F_{2\ell-1}L_{N-2\ell-1})) \\
= & \frac{1}{L_{N-1}+L_{N+1}} \times \\
 & (-3(L_{N-1}+L_{N+1})+4N(L_N-L_{N-2\ell})+3(L_{N+1}+L_{N-1})) &&\text{[by \eqref{formula6}]}\\
= & \frac{4N(L_N-L_{N-2\ell})}{L_{N-1}+L_{N+1}}.
\end{align*}

By \eqref{Luc11}, we have $L_{N-2\ell}=L_{2\ell-N}$.
Therefore, by combining this with $h(W_{N+1};0,\ell)=h(W_{N+1};0,N-\ell)$, we obtain 
\[
h(W_{N+1};0,\ell)=\frac{4N(L_N-L_{N-2\ell})}{L_{N-1}+L_{N+1}} \quad \text{for }0\leq \ell\leq N-1.
\]

Also, we have
\begin{align*}
h(W_{N+1};N,0)
= & 3\sum_{k=1}^{\frac{N-2}{2}}H_{N+1}^{-1}\left(\frac{N+2}{2},k\right)+3H_{N+1}^{-1}\left(\frac{N+2}{2},\frac{N}{2}\right)+NH_{N+1}^{-1}\left(\frac{N+2}{2},\frac{N+2}{2}\right) \\
= & \frac{1}{L_{N-1}+L_{N+1}}\left(3\sum_{k=1}^{\frac{N-2}{2}}(2(L_N-L_{N-2k}))+3(L_N-2)+NL_N\right) \\
= & \frac{1}{L_{N-1}+L_{N+1}}\left(6L_N\sum_{k=1}^{\frac{N-2}{2}}1-6\sum_{k=1}^{\frac{N-2}{2}}L_{N-2k}+3(L_N-2)+NL_N\right) \\
= & \frac{1}{L_{N-1}+L_{N+1}}\left(3(N-2)L_N-6\sum_{k=1}^{\frac{N-2}{2}}L_{2k}+3(L_N-2)+NL_N\right) \\
= & \frac{4NL_N-3L_N-6-6(L_{N-1}-1)}{L_{N-1}+L_{N+1}} \quad \text{[by \eqref{Luc9}]}\\
= & \frac{(4N-3)L_{N+1}-(4N-3)L_{N-1}-6L_{N-1}}{L_{N-1}+L_{N+1}} \\
= & \frac{(4N-3)L_{N+1}-(4N+3)L_{N-1}}{L_{N-1}+L_{N+1}}.
\end{align*}
 
\section{Effective resistance and complexity \label{sec05}} 

In this section, we derive formulae for effective resistance and spanning trees, denoting the effective resistance between two vertices $x$ and $y$ in a graph $G$ as $r(G;x,y)$. 
Nash-Williams \cite{Nash} proved the following surprising formula between effective resistance and average hitting time. 

\begin{theorem}[Nash-Williams \cite{Nash}] \label{Nash}
\begin{align*}
r(G;x,y)=\displaystyle\frac{h(G;x,y)+h(G;y,x)}{2|E(G)|}.
\end{align*}
\end{theorem}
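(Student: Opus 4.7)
The plan is to adapt the classical commute-time argument (Chandra, Raghavan, Ruzzo, Smolensky and Tiwari), which interprets $G$ as an electrical network with a unit resistor on every edge and identifies hitting-time functions with nodal potentials produced by specific current injections.

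First I would fix the sink $y$ and define $\phi_y(z):=h(G;z,y)$ for $z\neq y$, $\phi_y(y):=0$. A first-step analysis of the walk gives $\phi_y(z)=1+\tfrac{1}{d(z)}\sum_{w\sim z}\phi_y(w)$ for $z\neq y$, which is equivalent to $(L\phi_y)(z)=d(z)$ off $y$, where $L$ is the Laplacian of $G$. Because every row of $L$ sums to zero, summing this identity over all vertices pins down $(L\phi_y)(y)=-(2|E(G)|-d(y))$. Read electrically, this says that $\phi_y$ is the nodal potential produced by injecting the current $d(z)$ at every vertex $z$ and drawing the net surplus $2|E(G)|$ out at $y$; the identification is justified by the standard uniqueness fact that the Laplacian with one row and column removed is invertible on a connected graph.

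Next I would swap the roles of $x$ and $y$ to obtain an analogous potential $\phi_x$, and then invoke superposition. The difference $\psi:=\phi_y-\phi_x$ is the nodal potential of the pure source-sink problem in which current $2|E(G)|$ is injected at $x$ and extracted at $y$ with no other external source, since the degree-weighted contributions at every other vertex cancel. Reading off the terminal voltage drop gives
\begin{align*}
\psi(x)-\psi(y)=\phi_y(x)+\phi_x(y)=h(G;x,y)+h(G;y,x),
\end{align*}
and dividing by the driving current $2|E(G)|$ yields $r(G;x,y)$ by Ohm's law and the definition of effective resistance.

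The main obstacle is conceptual rather than computational: one must recognise that the hitting-time function solves a discrete Poisson equation whose right-hand side is the vertex degree, so that the appropriate current source to superpose is not unit-current at a single terminal but degree-weighted current at \emph{every} vertex. Once that identification is in place the theorem reduces to linear superposition and a one-line evaluation of the potential at the two terminals.
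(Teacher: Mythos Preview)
Your argument is correct and is precisely the standard commute-time proof due to Chandra, Raghavan, Ruzzo, Smolensky and Tiwari. Note, however, that the paper does not supply its own proof of this statement: Theorem~\ref{Nash} is quoted from the literature and used as a black box to pass from the hitting-time formulas of Theorem~\ref{Mainthm} to the effective-resistance formulas of Lemmas~\ref{lem1} and~\ref{lem2}. So there is no ``paper's proof'' to compare against; you have simply filled in the classical justification that the authors chose to cite rather than reproduce.
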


By combining the results of Theorems~\ref{Mainthm} and \ref{Nash}, we can obtain the exact formula for effective resistance in the 
wheel graph $W_{N+1}$ as follows. 

\begin{lemma} \label{lem1}
The exact formula for the effective resistance between periphery vertices $0$ and $\ell$ $(1\leq \ell\leq N-1)$ in $W_{N+1}$ is 
\begin{align*}
r(W_{N+1}; 0, \ell)
 =\begin{cases}
\displaystyle\frac{2(F_{N}-F_{N-2\ell})}{F_{N-1}+F_{N+1}} & \textit{if} \ N \text{ is odd,}\\
\displaystyle\frac{2(L_{N}-L_{N-2\ell})}{L_{N-1}+L_{N+1}} & \textit{if} \ N \text{ is even.}
\end{cases}
\end{align*}
\end{lemma}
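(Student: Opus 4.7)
The plan is to apply the Nash-Williams identity (Theorem~\ref{Nash}) with $x = v_0$ and $y = v_\ell$, and then substitute the closed-form expression for the average hitting time already established in Theorem~\ref{Mainthm}. The only two inputs besides Theorem~\ref{Mainthm} that I need are the edge count $|E(W_{N+1})|$ and a symmetry argument that collapses the numerator.

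First I would observe that the wheel graph $W_{N+1}$ has exactly $N$ cycle edges plus $N$ spokes, so $|E(W_{N+1})| = 2N$. Next, since any two peripheral vertices $v_0$ and $v_\ell$ are exchanged by an automorphism of $W_{N+1}$ (the reflection of the cycle that maps $v_k \mapsto v_{\ell - k}$, fixing the center $v_N$), the random walk is invariant under this swap, so
\begin{equation*}
h(W_{N+1}; 0, \ell) = h(W_{N+1}; \ell, 0).
\end{equation*}
Plugging this into Theorem~\ref{Nash} gives
\begin{equation*}
r(W_{N+1}; 0, \ell) = \frac{2\, h(W_{N+1}; 0, \ell)}{2 \cdot 2N} = \frac{h(W_{N+1}; 0, \ell)}{2N}.
\end{equation*}

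Finally, substituting the two cases of Theorem~\ref{Mainthm} yields directly
\begin{equation*}
r(W_{N+1}; 0, \ell) = \frac{1}{2N} \cdot \frac{4N(F_N - F_{N-2\ell})}{F_{N-1}+F_{N+1}} = \frac{2(F_N - F_{N-2\ell})}{F_{N-1}+F_{N+1}}
\end{equation*}
when $N$ is odd, and the analogous Lucas-number formula when $N$ is even. No further identities from Section~\ref{sec02} are needed since all the combinatorial work was already absorbed into Theorem~\ref{Mainthm}.

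There is really no serious obstacle: the entire content of the lemma is packaged in Theorem~\ref{Mainthm} together with Nash-Williams. The only point that requires any care is justifying $h(W_{N+1}; 0, \ell) = h(W_{N+1}; \ell, 0)$, and even that is immediate from the rotational/reflective symmetry of the wheel. So the proof will essentially be a two-line computation following the symmetry remark.
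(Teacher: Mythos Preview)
Your proposal is correct and follows essentially the same route as the paper: apply Nash--Williams with $|E(W_{N+1})|=2N$, use the symmetry $h(W_{N+1};0,\ell)=h(W_{N+1};\ell,0)$ to halve the numerator, and substitute Theorem~\ref{Mainthm}. If anything, you are slightly more explicit than the paper in justifying the symmetry step via an automorphism of the wheel.
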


\begin{proof}
If $N$ is odd, then
\begin{align*}
r(W_{N+1}; 0, \ell)
 & = \frac{h(W_{N+1};0,\ell)+h(W_{N+1};\ell,0)}{2|E(W_{N+1})|}\\
 & = \frac{h(W_{N+1};0,\ell)}{|E(W_{N+1})|}\\
 & =\frac{1}{2N}\cdot\frac{4N(F_{N}-F_{N-2\ell })}{F_{N-1}+F_{N+1}}\\
 & =\frac{2( F_{N} -F_{N-2\ell })}{F_{N-1} +F_{N+1}}.
\end{align*}

If $N$ is even, then
\begin{align*}
r(W_{N+1}; 0, \ell)
 & = \frac{h(W_{N+1};0,\ell)+h(W_{N+1};\ell,0)}{2|E(W_{N+1})|}\\
 & = \frac{h(W_{N+1};0,\ell)}{|E(W_{N+1})|}\\
 & =\frac{1}{2N}\cdot\frac{4N( L_{N} -L_{N-2\ell })}{L_{N-1} +L_{N+1}}\\
 & =\frac{2( L_{N} -L_{N-2\ell })}{L_{N-1} +L_{N+1}}.
\end{align*}
\end{proof}

By combining the results of Theorems~\ref{Mainthm} and \ref{Nash}, we can compute the effective resistance in the wheel graph $W_{N+1}$ as follows. 

\begin{lemma} \label{lem2}
The exact formula for the effective resistance between center vertex $N$ and periphery vertex $0$ in $W_{N+1}$ is
\begin{align*}
r(W_{N+1}; N, 0)
 =\begin{cases}
\displaystyle\frac{F_{N}}{F_{N-1}+F_{N+1}} & \textit{if} \ N \text{ is odd,}\\
\displaystyle\frac{L_{N}}{L_{N-1} +L_{N+1}} & \textit{if} \ N \text{ is even.}
\end{cases}
\end{align*}
\end{lemma}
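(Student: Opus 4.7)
The plan is to apply Theorem~\ref{Nash} just as in Lemma~\ref{lem1}, but now for the pair (center, periphery). The formula
\[
r(W_{N+1};N,0)=\frac{h(W_{N+1};N,0)+h(W_{N+1};0,N)}{2|E(W_{N+1})|}=\frac{h(W_{N+1};N,0)+h(W_{N+1};0,N)}{4N}
\]
uses $|E(W_{N+1})|=2N$. Theorem~\ref{Mainthm} already supplies $h(W_{N+1};N,0)$, so the only missing ingredient is $h(W_{N+1};0,N)$, and then the rest is algebraic simplification.

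To compute $h(W_{N+1};0,N)$, I would exploit the dihedral symmetry of $W_{N+1}$ that fixes the central vertex and acts transitively on the rim: this forces $h(W_{N+1};i,N)$ to be independent of the peripheral vertex $i$. Call this common value $h_{\ast}$. From any peripheral vertex the random walk has degree $3$; with probability $1/3$ it jumps to the center (and is done), and with probability $2/3$ it moves to another peripheral vertex. Hence
\[
h_{\ast}=1+\tfrac{1}{3}\cdot 0+\tfrac{2}{3}h_{\ast},
\]
giving $h_{\ast}=3$. So $h(W_{N+1};0,N)=3$.

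After substituting the expressions for $h(W_{N+1};N,0)$ from Theorem~\ref{Mainthm} and $h(W_{N+1};0,N)=3$, the numerator becomes $(4N-3)F_{N+1}-(4N+3)F_{N-1}+3(F_{N-1}+F_{N+1})=4N(F_{N+1}-F_{N-1})$ in the odd case, and analogously $4N(L_{N+1}-L_{N-1})$ in the even case. Dividing by $4N$ and using $F_{N+1}-F_{N-1}=F_{N}$ and $L_{N+1}-L_{N-1}=L_{N}$ immediately produces the claimed formulas.

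The only non-routine step is justifying $h(W_{N+1};0,N)=3$; once this is observed, the cancellations in the numerator are dictated and the proof is a short computation. I would therefore structure the write-up as: (i) note $|E(W_{N+1})|=2N$; (ii) establish $h(W_{N+1};0,N)=3$ from symmetry and a one-line recursion; (iii) plug into Theorem~\ref{Nash} together with Theorem~\ref{Mainthm}; (iv) simplify using the two Fibonacci/Lucas identities $F_{N+1}-F_{N-1}=F_{N}$ and $L_{N+1}-L_{N-1}=L_{N}$.
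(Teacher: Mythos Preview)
Your proposal is correct and follows essentially the same route as the paper: apply Theorem~\ref{Nash} with $|E(W_{N+1})|=2N$, use $h(W_{N+1};0,N)=3$, substitute the value of $h(W_{N+1};N,0)$ from Theorem~\ref{Mainthm}, and simplify via $F_{N+1}-F_{N-1}=F_N$ (resp.\ $L_{N+1}-L_{N-1}=L_N$). In fact you are slightly more explicit than the paper, which simply inserts the ``$+3$'' without comment, whereas you justify $h(W_{N+1};0,N)=3$ via the symmetry-plus-one-step-recursion argument.
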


\begin{proof}
If $N$ is odd, then
\begin{align*}
r(W_{N+1}; N, 0)
 & = \frac{h(W_{N+1};N,0)+h(W_{N+1};0,N)}{2|E(W_{N+1})|}\\
 & =\frac{1}{4N}\left(\frac{(4N-3)F_{N+1}-(4N+3)F_{N-1}}{F_{N-1}+F_{N+1}}+3\right)\\
 & =\frac{1}{4N}\cdot\frac{(4N-3)F_{N+1}-(4N+3)F_{N-1}+3(F_{N-1}+F_{N+1})}{F_{N-1}+F_{N+1}}\\
 & =\frac{1}{4N}\cdot\frac{4N(F_{N+1}-F_{N-1})}{F_{N-1}+F_{N+1}}\\
 & =\frac{F_{N}}{F_{N-1}+F_{N+1}}.
\end{align*}

If $N$ is even, then
\begin{align*}
r(W_{N+1}; N, 0)
 & = \frac{h(W_{N+1};N,0)+h(W_{N+1};0,N)}{2|E(W_{N+1})|}\\
 & =\frac{1}{4N}\left(\frac{( 4N-3) L_{N+1} -( 4N+3) L_{N-1}}{L_{N-1} +L_{N+1}}+3\right)\\
 & =\frac{1}{4N}\cdot\frac{(4N-3) L_{N+1} -( 4N+3) L_{N-1}+3( L_{N+1} +L_{N-1}) }{L_{N-1} +L_{N+1}}\\
 & =\frac{1}{4N}\cdot\frac{4N( L_{N+1} -L_{N-1})}{L_{N-1} +L_{N+1}}\\
 & =\frac{L_{N}}{L_{N-1} +L_{N+1}}.
\end{align*}
\end{proof}

Denoted by $T(G)$, the {\textit{graph complexity}} of a graph $G$ is the number of spanning trees in $G$. Then let $\tau(G; u, v)$ denote the graph complexity of the graph obtained by identifying two vertices $u$ and $v$ of $G$. Kirchhoff~\cite{Kir} proved the following well-known formula relating effective resistance and graph complexity. 

\begin{theorem}[Kirchhoff \cite{Kir}] \label{Kir}
\begin{align*}
\tau(G; u, v)=r(G; u, v) \cdot T(G). 
\end{align*}
\end{theorem}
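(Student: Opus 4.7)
The strategy is to reduce the identity to a Cramer's-rule computation on the Laplacian, by expressing all three quantities $T(G)$, $r(G;u,v)$, and $\tau(G;u,v)$ as minors of $L$. First I would invoke the classical Matrix Tree Theorem: for any vertex $w$, if $L_w$ denotes the reduced Laplacian obtained by deleting the row and column indexed by $w$, then $T(G)=\det(L_w)$. Choosing $w=v$ gives $T(G)=\det(L_v)$.

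Next I would encode the effective resistance via a potential calculation. Injecting a unit current at $u$ and extracting it at $v$, and imposing the ground condition $\phi_v=0$, the nodal equations reduce to $L_v\phi_{\hat v}=(e_u-e_v)_{\hat v}$, so $\phi_{\hat v}=L_v^{-1}e_u$ and hence $r(G;u,v)=\phi_u-\phi_v=(L_v^{-1})_{uu}$. Cramer's rule then yields
\[
r(G;u,v)\cdot T(G)=(L_v^{-1})_{uu}\cdot\det(L_v)=\det(L_{\hat u,\hat v}),
\]
where $L_{\hat u,\hat v}$ is the submatrix of $L$ with both rows and columns indexed by $u$ and $v$ deleted. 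It remains to identify this determinant with $\tau(G;u,v)$.

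For the final step I would apply the All-Minors Matrix Tree Theorem, obtained by a Cauchy--Binet expansion of $L_{\hat u,\hat v}$ via the signed incidence matrix of $G$: the non-vanishing terms correspond precisely to spanning $2$-forests of $G$ in which $u$ and $v$ lie in distinct components, each contributing $+1$. Such separating $2$-forests are in natural bijection with the spanning trees of the graph $G/\{u,v\}$ obtained by identifying $u$ with $v$ (any $uv$-edges are discarded as self-loops, consistent with the simple-graph convention for $\tau$), yielding $\det(L_{\hat u,\hat v})=\tau(G;u,v)$ and completing the proof.

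The main obstacle will be the combinatorial identification in this last step. The sign-cancellation underlying the All-Minors Matrix Tree Theorem requires care, and the bijection between separating $2$-forests of $G$ and spanning trees of the contracted graph must be set up so that multi-edges and possible $uv$-edges of $G$ are handled consistently; once that is in place, the remainder of the argument is essentially linear algebra.
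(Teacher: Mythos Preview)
Your proof plan is correct and follows the standard modern route to Kirchhoff's identity: Matrix Tree Theorem for $T(G)$, the Laplacian-inverse characterization of $r(G;u,v)$, Cramer's rule, and the All-Minors Matrix Tree Theorem to identify the double cofactor with $\tau(G;u,v)$. The caveat you flag about signs and the bijection between separating $2$-forests and spanning trees of the contraction is real but routine.

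There is, however, nothing in the paper to compare it to. The paper does not prove this theorem at all: it is stated as a classical result attributed to Kirchhoff~\cite{Kir} and used as a black box to derive Theorems~\ref{thm7} and~\ref{thm8}. So your proposal is not an alternative to the paper's argument but rather a self-contained proof where the paper simply invokes the literature.
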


Also derived by Myers~\cite{Myers}, Sedlacek~\cite{Sedlacek} proved the following formula for the graph complexity of $W_{N+1}$ using the matrix tree theorem~\cite{Kir}.

\begin{theorem}[Myers~\cite{Myers}, Sedlacek~\cite{Sedlacek}]\label{Myers}
\begin{align*}
T(W_{N+1})=L_{2N}-2. 
\end{align*}
\end{theorem}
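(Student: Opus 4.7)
The plan is to prove the formula via the matrix-tree theorem, reducing $T(W_{N+1})$ to a circulant determinant and then evaluating it through a golden-ratio substitution.

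First, I would invoke the matrix-tree theorem: $T(W_{N+1})$ equals any principal cofactor of the Laplacian of $W_{N+1}$. Deleting the row and column corresponding to the hub vertex $v_N$ leaves the $N\times N$ matrix $A=L(C_N)+I_N$, where $L(C_N)$ denotes the Laplacian of the rim $N$-cycle. Hence $T(W_{N+1})=\det A$.

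Next, since $L(C_N)$ is circulant with eigenvalues $2-2\cos(2\pi k/N)$ for $k=0,1,\dots,N-1$, the same Fourier basis diagonalises $A$, giving it eigenvalues $3-2\cos(2\pi k/N)$. Therefore
\[
T(W_{N+1})=\prod_{k=0}^{N-1}\bigl(3-2\cos(2\pi k/N)\bigr).
\]

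The third step is the computational heart: evaluate this product in closed form. Let $\phi=(1+\sqrt{5})/2$, so $\phi^{2}+\phi^{-2}=3$. Setting $\alpha=\phi^{2}$ and $\omega=e^{2\pi i/N}$, the factorisation $\alpha+\alpha^{-1}-\omega^{k}-\omega^{-k}=-\omega^{-k}(\omega^{k}-\alpha)(\omega^{k}-\alpha^{-1})$ together with $\prod_{k=0}^{N-1}(x-\omega^{k})=x^{N}-1$ yields
\[
\prod_{k=0}^{N-1}\bigl(\alpha+\alpha^{-1}-\omega^{k}-\omega^{-k}\bigr)=\alpha^{N}+\alpha^{-N}-2=\phi^{2N}+\phi^{-2N}-2.
\]
Since the conjugate root $\psi=-1/\phi$ of $x^{2}=x+1$ satisfies $\psi^{2N}=\phi^{-2N}$, Binet's formula gives $L_{2N}=\phi^{2N}+\phi^{-2N}$, and the claimed identity $T(W_{N+1})=L_{2N}-2$ follows at once.

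The main obstacle is the sign bookkeeping in the cyclotomic product: one must verify that the $(-1)^{N}$ from the $N$ factorisations, the phase $\prod_{k=0}^{N-1}\omega^{-k}=(-1)^{N-1}$, and the two factors $(-1)^{N}$ coming from $\prod_{k=0}^{N-1}(\omega^{k}-\alpha)=(-1)^{N}(\alpha^{N}-1)$ combine to an overall $-1$, which then turns $2-\alpha^{N}-\alpha^{-N}$ into $\alpha^{N}+\alpha^{-N}-2$ without a residual sign. Once this identity is secure, the translation to Lucas numbers is immediate.
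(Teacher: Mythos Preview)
Your proof is correct. The matrix-tree reduction to $\det(L(C_N)+I_N)$, the spectral evaluation $\prod_{k=0}^{N-1}(3-2\cos(2\pi k/N))$, and the cyclotomic-product computation via the substitution $\alpha=\phi^{2}$ all go through, and your accounting of the signs is accurate: the four factors $(-1)^N$, $(-1)^{N-1}$, $(-1)^N$, $(-1)^N$ combine to $(-1)^{4N-1}=-1$, turning $(\alpha^N-1)(\alpha^{-N}-1)=2-\alpha^N-\alpha^{-N}$ into $\alpha^N+\alpha^{-N}-2=L_{2N}-2$.

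However, there is nothing to compare against: the paper does not prove this theorem at all. It is quoted as a known result of Myers and Sedl\'a\v{c}ek (derived there via the matrix-tree theorem) and is used as a black box in the subsequent computation of $\tau(W_{N+1};0,\ell)$ and $\tau(W_{N+1};N,0)$. Your argument therefore supplies a complete proof where the paper supplies only a citation; the spectral/cyclotomic route you take is the standard one and is essentially what the cited references do.
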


The number of spanning trees in $W_{N+1}$ when the peripheral vertices are identified can be computed as follows by combining Theorems~\ref{Kir} and \ref{Myers} and Lemma~\ref{lem1}. 

\begin{theorem} \label{thm7}
The exact 
formula for the number of spanning trees when the peripheral vertices of $W_{N+1}$ are identified, denoted as $\tau(W_{N+1}; 0, \ell)$ $(1\leq \ell\leq N-1)$, can be written as follows:
\begin{align*}
\tau(W_{N+1}; 0, \ell)
 =\begin{cases}
2(F_{N}-F_{N-2\ell })( F_{N-1} +F_{N+1}) & \text{if } N\text{ is odd,}\\
\displaystyle\frac{2(L_{N}-L_{N-2\ell})(L_{N}-2)(L_{N}+2)}{L_{N-1}+L_{N+1}} & \text{if } N\text{ is even.}
\end{cases}
\end{align*}
\end{theorem}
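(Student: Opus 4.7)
My plan is to combine Kirchhoff's formula (Theorem~\ref{Kir}) with the known spanning-tree count $T(W_{N+1})=L_{2N}-2$ (Theorem~\ref{Myers}) and the effective resistance $r(W_{N+1};0,\ell)$ computed in Lemma~\ref{lem1}, so that
\[
\tau(W_{N+1};0,\ell)=r(W_{N+1};0,\ell)\cdot(L_{2N}-2).
\]
The proof then splits along the parity of $N$, matching the two cases already handled in Lemma~\ref{lem1}, and in each case the work reduces to rewriting $L_{2N}-2$ in a form that interacts cleanly with the denominator of the effective resistance.

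For odd $N$, I substitute $r(W_{N+1};0,\ell)=2(F_N-F_{N-2\ell})/(F_{N-1}+F_{N+1})$ and apply \eqref{Luc10} to write $L_{2N}-2=L_N^2$. Combining this with the identity $L_N=F_{N-1}+F_{N+1}$ recalled in Section~\ref{sec02} yields $L_{2N}-2=(F_{N-1}+F_{N+1})^2$. One factor of $F_{N-1}+F_{N+1}$ then cancels with the denominator coming from the resistance, producing exactly $2(F_N-F_{N-2\ell})(F_{N-1}+F_{N+1})$.

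For even $N$, I substitute $r(W_{N+1};0,\ell)=2(L_N-L_{N-2\ell})/(L_{N-1}+L_{N+1})$. The identity \eqref{Luc10} is parity-sensitive, and for even index the correct analogue is $L_{2N}-2=L_N^2-4=(L_N-2)(L_N+2)$, which follows from $L_{2n}=L_n^2-2(-1)^n$. Substituting this factorization directly gives
\[
\tau(W_{N+1};0,\ell)=\frac{2(L_N-L_{N-2\ell})(L_N-2)(L_N+2)}{L_{N-1}+L_{N+1}},
\]
matching the stated formula, with no further cancellation performed because $L_{N-1}+L_{N+1}=5F_N$ has no obvious factor among $L_N\pm 2$.

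The computation is essentially a one-line substitution in each parity case, so the only real obstacle is conceptual: one must spot the correct parity-dependent factorization of $L_{2N}-2$. In the odd case this factorization is a perfect square that fully cancels the denominator (explaining the polynomial form $2(F_N-F_{N-2\ell})(F_{N-1}+F_{N+1})$), while in the even case the factorization is a difference of squares that does not cancel the denominator (explaining why the even formula retains a rational form). Once this observation is made, Kirchhoff's theorem does all the remaining work.
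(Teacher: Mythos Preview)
Your proof is correct and follows the same architecture as the paper: combine Kirchhoff's relation $\tau=r\cdot T$ with Lemma~\ref{lem1} and Theorem~\ref{Myers}, then simplify $L_{2N}-2$ according to the parity of $N$. The even case is handled identically in both. In the odd case your simplification is actually more direct than the paper's: you go straight from $L_{2N}-2=L_N^2$ and $L_N=F_{N-1}+F_{N+1}$ to $(F_{N-1}+F_{N+1})^2$, whereas the paper reaches the same square via the longer detour $L_{2N}-2=F_{2N-1}+F_{2N+1}-2$, identity~\eqref{Fib11}, and Cassini's identity~\eqref{Fib10}.
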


If $N$ is odd, then
\begin{align*}
\tau(W_{N+1}; 0, \ell)
 & =r(W_{N+1}; 0, \ell) \cdot T(W_{N+1})\\
 & =\frac{2( F_{N} -F_{N-2\ell })}{F_{N-1} +F_{N+1}} ( L_{2N} -2)\\
 & =\frac{2( F_{N} -F_{N-2\ell })}{F_{N-1} +F_{N+1}} (  F_{2N-1} +F_{2N+1} -2)\\
 & =\frac{2( F_{N} -F_{N-2\ell })}{F_{N-1} +F_{N+1}} ( F_{N}^{2} +F_{N-1}^{2} +F_{N+1}^{2} +F_{N}^{2} -2) &&\text{[by \eqref{Fib11}]}\\
 & =\frac{2( F_{N} -F_{N-2\ell })}{F_{N-1} +F_{N+1}} (( F_{N-1} +F_{N+1})^{2} +2( F_{N}^{2} -F_{N-1} F_{N+1}) -2)\\
 & =\frac{2( F_{N} -F_{N-2\ell })}{F_{N-1} +F_{N+1}} (( F_{N-1} +F_{N+1})^{2} +2\cdot ( -1)^{N+1} -2) &&\text{[by \eqref{Fib10}]}\\
 & =2( F_{N} -F_{N-2\ell })( F_{N-1} +F_{N+1}).
\end{align*}

If $N$ is even, then
\begin{align*}
\tau(W_{N+1}; 0, \ell)
 & =r( W_{N+1} ;0,\ell ) \cdot T( W_{N+1})\\
 & =\frac{2( L_{N} -L_{N-2\ell })}{L_{N-1} +L_{N+1}} ( L_{2N} -2)\\
 & =\frac{2( L_{N} -L_{N-2\ell })}{L_{N-1} +L_{N+1}} (L_N^2-2\cdot(-1)^N-2) &&\text{[by \eqref{Luc10}]}\\
 & =\frac{2( L_{N} -L_{N-2\ell })( L_{N} -2)( L_{N} +2)}{L_{N-1} +L_{N+1}}.
\end{align*}

Finally, the number of spanning trees when the center and the peripheral vertices of $W_{N+1}$ are identified can be expressed as follows: 

\begin{theorem} \label{thm8}
The exact formula for the number of spanning trees when the center and peripheral vertices of $W_{N+1}$ are identified, denoted as $\tau(W_{N+1}; N, 0)$, can be written as follows: 
\begin{align*}
\tau(W_{N+1};N,0)
 =\begin{cases}
F_N( F_{N-1} +F_{N+1}) & \text{if } N\text{ is odd}\\
\displaystyle\frac{L_N( L_{N} -2)( L_{N} +2)}{L_{N-1} +L_{N+1}} & \text{if } N\text{ is even}
\end{cases}
\end{align*}
\end{theorem}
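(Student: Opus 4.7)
The plan is to apply Kirchhoff's identity (Theorem~\ref{Kir}) directly to the pair $(N,0)$ in $W_{N+1}$, inserting the effective resistance $r(W_{N+1};N,0)$ supplied by Lemma~\ref{lem2} and the total spanning-tree count $T(W_{N+1})=L_{2N}-2$ supplied by Theorem~\ref{Myers}. Thus I write
\[
\tau(W_{N+1};N,0) \;=\; r(W_{N+1};N,0)\cdot (L_{2N}-2),
\]
and the only remaining work is to rewrite $L_{2N}-2$ in a form that cancels the denominator $F_{N-1}+F_{N+1}$ (or $L_{N-1}+L_{N+1}$) coming from $r$. This is exactly the reduction already carried out inside the proof of Theorem~\ref{thm7}; the wrinkle is that here the numerator of $r$ is simply $F_N$ (or $L_N$), rather than $2(F_N-F_{N-2\ell})$ (or $2(L_N-L_{N-2\ell})$), so the final cancellation is even cleaner.

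For $N$ odd I would use \eqref{Fib11} to split
\[
L_{2N}=F_{2N-1}+F_{2N+1}=F_{N-1}^{2}+2F_N^{2}+F_{N+1}^{2}=(F_{N-1}+F_{N+1})^{2}+2\bigl(F_N^{2}-F_{N-1}F_{N+1}\bigr),
\]
and then apply the Catalan-type identity \eqref{Fib10} with $r=1$ to conclude $F_N^{2}-F_{N-1}F_{N+1}=(-1)^{N-1}=1$. Hence $L_{2N}-2=(F_{N-1}+F_{N+1})^{2}$, and multiplying by $r(W_{N+1};N,0)=F_N/(F_{N-1}+F_{N+1})$ collapses the expression to $F_N(F_{N-1}+F_{N+1})$, as claimed.

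For $N$ even I would invoke the Lucas squaring identity \eqref{Luc10} in the form $L_{2N}=L_N^{2}-2(-1)^{N}$, which for even $N$ gives $L_{2N}-2=L_N^{2}-4=(L_N-2)(L_N+2)$. Multiplying by $r(W_{N+1};N,0)=L_N/(L_{N-1}+L_{N+1})$ then produces the stated $\tau(W_{N+1};N,0)=L_N(L_N-2)(L_N+2)/(L_{N-1}+L_{N+1})$. There is no substantive obstacle in this theorem: once Kirchhoff's formula, Lemma~\ref{lem2}, and Theorem~\ref{Myers} are in hand, each parity case is a one-line algebraic specialisation of the computation already performed in Theorem~\ref{thm7}, and the hub-to-rim case is, if anything, lighter than the rim-to-rim case treated there.
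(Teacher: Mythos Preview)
Your proposal is correct and follows essentially the same route as the paper: apply Kirchhoff's identity with Lemma~\ref{lem2} and Theorem~\ref{Myers}, then simplify $L_{2N}-2$ via \eqref{Fib11} and \eqref{Fib10} in the odd case and via \eqref{Luc10} in the even case, exactly mirroring the computation in Theorem~\ref{thm7}. The only cosmetic difference is that you state the Lucas identity in the form $L_{2N}=L_N^{2}-2(-1)^{N}$ before specialising to even $N$, whereas the paper writes the specialised form directly.
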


If $N$ is odd, then
\begin{align*}
\tau (W_{N+1}; N,0)
 & =r(W_{N+1}; N,0) \cdot T( W_{N+1})\\
 & =\frac{F_{N}}{F_{N-1}+F_{N+1}}( L_{2N} -2)\\
 & =\frac{F_{N}}{F_{N-1}+F_{N+1}}(F_{2N-1} +F_{2N+1} -2)\\
 & =\frac{F_{N}}{F_{N-1} +F_{N+1}} ( F_{N}^{2} +F_{N-1}^{2} +F_{N+1}^{2} +F_{N}^{2} -2) &&\text{[by \eqref{Fib11}]}\\
 & =\frac{F_{N}}{F_{N-1} +F_{N+1}} (( F_{N-1} +F_{N+1})^{2} +2( F_{N}^{2} -F_{N-1} F_{N+1}) -2)\\
 & =\frac{F_{N}}{F_{N-1} +F_{N+1}} (( F_{N-1} +F_{N+1})^{2} +2\cdot ( -1)^{N+1} -2) &&\text{[by \eqref{Fib10}]}\\
 & =F_N(F_{N-1}+F_{N+1}).
\end{align*}

If $N$ is even, then
\begin{align*}
\tau ( W_{N+1} ;N,0 ) & =r( W_{N+1} ;N,0 ) \cdot T( W_{N+1})\\
 & =\frac{L_{N}}{L_{N-1} +L_{N+1}}( L_{2N} -2)\\
 & =\frac{L_{N}}{L_{N-1} +L_{N+1}}(L_N^2-2\cdot(-1)^N-2) &&\text{[by \eqref{Luc10}]}\\
 & =\frac{L_{N}( L_{N} -2)( L_{N} +2)}{L_{N-1} +L_{N+1}}.
\end{align*}

\section{Conclusion \label{sec06}} 
In this paper, by using an approach similar to that in \cite{Etal, TS, TY, TY2}, we provided exact formulas for the average hitting times of simple random walks on the wheel graph $W_{N+1}$, where the cycle part consists of $N$ vertices and a single vertex is placed at the center of the cycle. We showed that these formulas can be expressed in terms of the Fibonacci and Lucas numbers, depending on whether the number of vertices is odd or even. 
Using these formulas, we also derived explicit expressions for the key graph-theoretic metrics of the effective resistance and the number of spanning trees when two vertices are identified. 

Interesting problems for future research would be extending our model to include multiple internal vertices and investigating the average hitting times in such configurations, as well as exploring the underlying reasons why Fibonacci and Lucas numbers appear in these formulas.

\section*{Acknowledgments}
The research of Yuuho Tanaka is supported by a JSPS Grant-in-Aid for JSPS Fellows (23KJ2020).



\begin{thebibliography}{99}
\bibitem{Ald}
Aldous, D. J. : 
Hitting times for random walks on vertex-transitive graphs. 
\textit{Mathematical Proceedings of the Cambridge Philosophical Society} {\bf 106} 179--191 (1989)


\bibitem{Etal} 
Doi, Y., Konno, N., Nakamigawa, T., Sakuma, T., Segawa, E., Shinohara, H., Tamura, S., Tanaka, Y.,  and Toyota, K. :
On the average hitting times of the squares of cycles. 
\textit{Discrete Applied Mathematics} {\bf 313} 18-28  (2022)

\bibitem{Kir}
Kirchhoff, G. : 
Uber die Aufl\"{o}sung der Gleichungen, auf welche man bei der Untersuchung der linearen Verteilung galvanischer Str\"{o}me gef\"{u}hrt wird, \textit{Annalen Physik und Chemie} \textbf{72} 497-508  (1847)


\bibitem{Lovasz}
Lov$\acute{a}$sz, L. : Random walks on graphs: A survey. 
\textit{Combinatorics, Paul Erdős is eighty} Vol. 2 (1993)

\bibitem{Myers}
Myers, B. R. : 
Number of spanning trees in a wheel. 
\textit{IEEE Transactions on Circuit Theory} {\bf CT-18}, 280-282 (1971)

\bibitem{Nash}  C. S. J. A. Nash-Williams. : 
Random walk and electric currents in networks. \textit{Proceedings of the Cambridge Philosophical Society. Mathematical and Physical Sciences} \textbf{55} 181-194 (1959)

\bibitem{Nishimura}
Y. Nishimura. : 
Average hitting times in some $f$-equitable graphs,
https://arxiv.org/abs/2312.11848

\bibitem{Sharavas}
Sharavas, K. R. : 
Finding hitting times in various graphs. 
\textit{Statistics \& Probability Letters} {\bf 83} 2067-2072 (2013)

\bibitem{Sedlacek}
Sedlacek, J. : 
On the skeletons of a graph or digraph. 
\textit{Proceedings of the Calgary International Conference of Combinatorial Structures and Their Applications}, Gordon and Breach, 387-391 (1970)

\bibitem{TS}
Tamura, S. : 
On the average hitting times of the directed wheel, 
in submitted. 

\bibitem{TY}
Tanaka, Y. : 
On the average hitting times of $\mbox{Cay}(\mathbb{Z}_N, \{+1, +2\})$. 
\textit{Discrete Applied Mathematics} {\bf 343} 269-276 (2024)

\bibitem{TY2}
Tanaka, Y. : 
On the average hitting times of weighted Cayley graphs,
https://arxiv.org/abs/2310.16571

\bibitem{Thomas}
Thomas, K. : \textit{Fibonacci and Lucas Numbers with Applications, Volume 1}, Pure and Applied Mathematics: A Wiley Series of Texts, Monographs and Tracts (2018)

\bibitem{Koshy}
Thomas, K. : \textit{Fibonacci and Lucas Numbers with Applications, Volume 2}, Pure and Applied Mathematics: A Wiley Series of Texts, Monographs and Tracts (2019)




\bibitem{YY}
Yang, Y. : 
Simple random walks on wheel graphs. 
\textit{Applied Mathematics $\&$ Information Sciences} {\bf 6} 123-128 (2012)


\end{thebibliography}
\end{document}